\documentclass[secthm,seceqn,amsthm,ussrhead,12pt]{amsart}
\usepackage{amsmath,latexsym}
\usepackage[english]{babel}
\usepackage[psamsfonts]{amssymb}
\usepackage{times}
\usepackage{cite}

\usepackage[mathcal]{euscript}
\numberwithin{equation}{section} \textwidth=17.5cm
\topmargin=0.4cm \oddsidemargin=-0.4cm \evensidemargin=-0,4cm
\textheight=21.0cm

\newtheorem{theorem}{Theorem}[section]
\newtheorem{lemma}[theorem]{Lemma}

\numberwithin{equation}{section}

\begin{document}

\title[Derivations on algebras of measurable operators]{Innerness of continuous derivations
on algebras of measurable operators affiliated with finite von
Neumann algebras}

\author{Shavkat Ayupov and Karimbergen Kudaybergenov}

\address[Shavkat Ayupov]{Institute of
 Mathematics National University of
Uzbekistan,
 100125  Tashkent,   Uzbekistan
 and
 the Abdus Salam International Centre
 for Theoretical Physics (ICTP),
  Trieste, Italy}
\email{sh$_{-}$ayupov@mail.ru}

\address[Karimbergen Kudaybergenov]{Department of Mathematics, Karakalpak state
university, Nukus 230113, Uzbekistan.} \email{karim2006@mail.ru}

\maketitle
\begin{abstract}
This paper is devoted to derivations on the algebra $S(M)$ of all
measurable operators affiliated with a finite von Neumann algebra
$M.$  We prove that if  $M$ is  a finite von Neumann algebra with
a faithful normal semi-finite trace $\tau$, equipped with the
locally measure topology  $t,$ then every $t$-continuous
derivation $D:S(M)\rightarrow S(M)$ is inner. A similar result is valid
for derivation on the algebra $S(M,\tau)$ of $\tau$-measurable operators
equipped with the measure topology $t_{\tau}$.
\end{abstract} \maketitle

\bigskip

\section{Introduction}

\medskip

Given an algebra $\mathcal{A},$ a linear operator
$D:\mathcal{A}\rightarrow \mathcal{A}$ is called a
\textit{derivation}, if $D(xy)=D(x)y+xD(y)$ for all $x, y\in
\mathcal{A}$ (the Leibniz rule). Each element $a\in \mathcal{A}$
implements a derivation $D_a$ on $\mathcal{A}$ defined as
$D_a(x)=[a, x]=ax-xa,$ $x\in \mathcal{A}.$ Such derivations $D_a$
are said to be \textit{inner derivations}. If the element $a,$
implementing the derivation $D_a,$ belongs to a larger algebra
$\mathcal{B}$ containing $\mathcal{A},$ then $D_a$ is called
\textit{a spatial derivation} on $\mathcal{A}.$

One of the main problems in the theory of derivations is to prove
the automatic continuity, ``innerness'' or ``spatiality'' of
derivations, or to show the existence of non-inner and
discontinuous derivations on various topological algebras. In
particular, it is a general algebraic problem to find algebras
which admit only inner derivations. Examples of algebra for which
any derivation is inner include:

\begin{itemize}
\item finite dimensional simple central algebras (see \cite[p.\ 100]{Her});
\item simple unital $C^{\ast}$-algebras (see the main theorem of \cite{Sak68});
\item the algebras $B(X),$ where $X$ is a Banach
space (see \cite[Corollary 3.4]{Che}).
\item von Neumann algebras (see \cite[Theorem 1]{Sak66})
\end{itemize}

\medskip

A related problem is:
\begin{quotation}
Given an algebra $\mathcal{A},$ is there an algebra $\mathcal{B}$
containing $\mathcal{A}$ as a subalgebra such that any derivation
of $\mathcal{B}$ is inner and any derivation of the algebra
$\mathcal{A}$ is spatial in $\mathcal{B}$?
\end{quotation}

\medskip

The following are some examples for which the answer is positive:

\begin{itemize}
\item $C^{\ast}$-algebras (see \cite[Theorem 4]{Kad} or \cite[Theorem 2]{Sak66});
\item standard operator algebras on a Banach space $X$, i.e.\
subalgebras of $B(X)$ containing all finite rank operators (see
\cite[Corollary~3.4]{Che}).
\end{itemize}

\medskip

In \cite{Alb2} and \cite{AK1}, derivations on various subalgebras
of the algebra $LS(M)$ of locally measurable operators with
respect to  a von Neumann algebra $M$ has been considered. A
complete description of derivations has been obtained in the case
when $M$ is of type I and III. Derivations on  algebras of
measurable and locally measurable operators, including rather non
trivial commutative case, have been studied  by many authors
\cite{Alb2, AK2, AK1, Ber, BPS, Ber2, Ber3, Ber4}. A comprehensive
survey of recent results concerning derivations on various
algebras of unbounded operators affiliated with von Neumann
algebras can be found in \cite{AK2}.

If we consider the algebra $S(M)$ of all measurable
operators affiliated with  a type III von Neumann algebra $M$, then it is clear that
$S(M)=M$. Therefore from the results of  \cite{Alb2} it follows that
 for type I$_\infty$ and type III von Neumann algebras $M$ every derivation on $S(M)$ is
automatically inner and, in particular, is continuous in the local
measure topology.
 The problem of description of the structure of
derivations in the case of type II algebras has been open so far
and seems to be rather difficult.

In this connection several open problems concerning innerness and
automatic continuity of derivations on the algebras $S(M)$ and
$LS(M)$ for type II von Neumann algebras have been posed in
\cite{AK2}. First positive results in this direction were recently
obtained in \cite{Ber2, Ber4}, where automatic continuity has been
proved for derivations on
 algebras of  $\tau$-measurable and locally
 measurable operators affiliated with properly infinite von Neumann algebras.

 Another problem  in \cite[Problem 3]{AK2} asks the following
question:

Let $M$ be a type II von Neumann algebra with a faithful normal
semi-finite trace $\tau.$ Consider  the algebra $S(M)$  (respectively $LS(M)$) of all
measurable (respectively locally measurable) operators affiliated with $M$ and equipped with the
locally measure topology  $t.$ Is every  $t$-continuous derivation
$D:S(M)\rightarrow S(M)$ (respectively, $D:LS(M)\rightarrow LS(M)$) necessarily inner?

In the present paper we suggest a solution of this problem  for
 type II$_1$  von Neumann algebras (in this case $LS(M)=S(M)$).  Namely, we
prove that if  $M$ is  a finite von Neumann algebra and
$D:S(M)\rightarrow S(M)$ is a $t$-continuous derivation then $D$
is inner. A similar result is proved
for derivation on the algebra $S(M,\tau)$ of all $\tau$-measurable operators
equipped with the measure topology $t_{\tau}$.

\section{Algebras of measurable operators}

Let  $B(H)$ be the $\ast$-algebra of all bounded linear operators
on a Hilbert space $H,$ and let $\textbf{1}$ be the identity
operator on $H.$ Consider a von Neumann algebra $M\subset B(H)$
with the operator norm $\|\cdot\|$ and  with a faithful normal
semi-finite trace $\tau.$ Denote by $P(M)=\{p\in M:
p=p^2=p^\ast\}$ the lattice of all projections in $M.$

A linear subspace  $\mathcal{D}$ in  $H$ is said to be
\emph{affiliated} with  $M$ (denoted as  $\mathcal{D}\eta M$), if
$u(\mathcal{D})\subset \mathcal{D}$ for every unitary  $u$ from
the commutant
$$M'=\{y\in B(H):xy=yx, \,\forall x\in M\}$$ of the von Neumann algebra $M.$

A linear operator  $x: \mathcal{D}(x)\rightarrow H,$ where the
domain  $\mathcal{D}(x)$ of $x$ is a linear subspace of $H,$ is
said to be \textit{affiliated} with  $M$ (denoted as  $x\eta M$)
if $\mathcal{D}(x)\eta M$ and $u(x(\xi))=x(u(\xi))$
 for all  $\xi\in
\mathcal{D}(x)$  and for every unitary  $u\in M'.$

A linear subspace $\mathcal{D}$ in $H$ is said to be
\textit{strongly dense} in  $H$ with respect to the von Neumann
algebra  $M,$ if

1) $\mathcal{D}\eta M;$

2) there exists a sequence of projections $\{p_n\}_{n=1}^{\infty}$
in $P(M)$  such that $p_n\uparrow\textbf{1},$ $p_n(H)\subset
\mathcal{D}$ and $p^{\perp}_n=\textbf{1}-p_n$ is finite in  $M$
for all $n\in\mathbb{N}$.

A closed linear operator  $x$ acting in the Hilbert space $H$ is
said to be \textit{measurable} with respect to the von Neumann
algebra  $M,$ if
 $x\eta M$ and $\mathcal{D}(x)$ is strongly dense in  $H.$

 Denote by $S(M)$  the set of all linear operators on $H,$ measurable with
respect to the von Neumann algebra $M.$ If $x\in S(M),$
$\lambda\in\mathbb{C},$ where $\mathbb{C}$  is the field of
complex numbers, then $\lambda x\in S(M)$  and the operator
$x^\ast,$  adjoint to $x,$  is also measurable with respect to $M$
(see \cite{Seg}). Moreover, if $x, y \in S(M),$ then the operators
$x+y$  and $xy$  are defined on dense subspaces and admit closures
that are called, correspondingly, the strong sum and the strong
product of the operators $x$  and $y,$  and are denoted by
$x\stackrel{.}+y$ and $x \ast y.$ It was shown in \cite{Seg} that
$x\stackrel{.}+y$ and $x \ast y$ belong to $S(M)$ and these
algebraic operations make $S(M)$ a $\ast$-algebra with the
identity $\textbf{1}$  over the field $\mathbb{C}.$ Here, $M$ is a
$\ast$-subalgebra of $S(M).$ In what follows, the strong sum and
the strong product of operators $x$ and $y$  will be denoted in
the same way as the usual operations, by $x+y$  and $x y.$

It is clear that if the von Neumann algebra $M$ is finite then every linear operator
affiliated with $M$ is measurable and, in particular, a self-adjoint operator is
measurable with respect to $M$ if and only if all its
 spectral projections belong to $M$.

 Let   $\tau$ be a faithful normal semi-finite trace on
 $M.$ We recall that a closed linear operator
  $x$ is said to be  $\tau$\textit{-measurable} with respect to the von Neumann algebra
   $M,$ if  $x\eta M$ and   $\mathcal{D}(x)$ is
  $\tau$-dense in  $H,$ i.e. $\mathcal{D}(x)\eta M$ and given   $\varepsilon>0$
  there exists a projection   $p\in M$ such that   $p(H)\subset\mathcal{D}(x)$
  and $\tau(p^{\perp})<\varepsilon.$
   Denote by  $S(M,\tau)$ the set of all   $\tau$-measurable operators affiliated with $M.$

Note that if the trace $\tau$ is finite then $S(M,\tau)=S(M).$

    Consider the topology  $t_{\tau}$ of convergence in measure or \textit{measure topology}
    on $S(M, \tau),$ which is defined by
 the following neighborhoods of zero:
$$V(\varepsilon, \delta)=\{x\in S(M, \tau): \exists\, e\in P(M),
 \tau(e^{\perp})<\delta, xe\in
M,  \|xe\|<\varepsilon\},$$  where $\varepsilon, \delta$ are
positive numbers.

 It is well-known \cite{Nel} that $M$ is $t_\tau$-dense in $S(M, \tau)$
and $S(M, \tau)$ equipped with the measure topology is a complete
metrizable topological $\ast$-algebra.

Let $M$ be a finite  von Neumann algebra with a faithful normal
semi-finite trace $\tau.$ Then there exists a family
$\{z_i\}_{i\in I}$ of mutually orthogonal central projections in
$M$ with $\bigvee\limits_{i\in I}z_i=\mathbf{1}$ and such that
$\tau(z_i)<+\infty$  for every $i\in I$  (such family exists
because $M$ is a finite algebra). Then the algebra  $S(M)$ is
$\ast$-isomorphic to the algebra $\prod\limits_{i\in I}S(z_iM)$
(with the coordinate-wise operations and involution), i.e.
$$
S(M)\cong\prod\limits_{i\in I}S(z_iM)
$$ ($\cong$ denoting
$\ast$-isomorphism of algebras) (see \cite{Mur}).

This property implies that given any family $\{z_i\}_{i\in I}$ of
mutually orthogonal central projections in $M$ with
$\bigvee\limits_{i\in I}z_i=\textbf{1}$ and a  family of elements
$\{x_i\}_{i\in I}$ in $S(M)$ there exists a unique element $x\in
S(M)$ such that $z_i x=z_i x_i$ for all $i\in I.$

Let $t_{\tau_i}$  be the measure topology on $S(z_iM)=S(z_iM,
\tau_i),$ where $\tau_i=\tau|_{z_iM},$ $i\in I.$ On the algebra
$S(M)\cong\prod\limits_{i\in I}S(z_iM)$ we  consider the topology
$t$ which is the Tychonoff product of the topologies $t_{\tau_i},
i\in I.$ This topology coincides with so-called \textit{locally
measure topology} on $S(M)$ (see \cite[Remark 2.7]{Ber4}).

 It is known \cite{Mur} that  $S(M)$ equipped with the locally measure topology is a
 topological $\ast$-algebra.
Note that if the trace $\tau$ is finite then $t=t_{\tau}.$

\section{The Main results}

\medskip

Given a von Neumann algebra   $M$ with a faithful normal finite
trace $\tau,$  $\tau(\mathbf{1})=1,$ we consider the $L_2$-norm
$$
\|x\|_2=\sqrt{\tau(x^\ast x)},\, x\in M.
$$

 Denote by $\mathcal{U}(M)$  and $\mathcal{GN}(M)$ the set of all unitaries in
 $M$ and   the set of all partially isometries  in $M,$ respectively.

 A partial ordering can be defined on the set $\mathcal{GN}(M)$
as follows:
$$
u\leq_1 v \Leftrightarrow u u^\ast\leq v v^\ast,\, u= uu^\ast v.
$$
It is clear  that
$$
u\leq_2 v \Leftrightarrow  u^\ast u\leq  v^\ast v,\, u=  v u^\ast
u
$$
is also defined a partial ordering  on the set $\mathcal{GN}(M)$
and
$$
u\leq_1 v \Leftrightarrow   u^\ast \leq_2  v^\ast.
$$

Note that $u^\ast u=r(u)$ is the right support of $u,$ and
$uu^\ast=l(u)$ is the left support of $u.$

The $t$-continuity of algebraic operations on $S(M)$ implies that
every inner derivation on $S(M)$ is $t$-continuous.

The following main result of the paper shows that the converse
implication is also true.

\begin{theorem}\label{main}
Let $M$ be a finite   von Neumann algebra with a faithful normal
semi-finite trace $\tau$. Then every $t$-continuous derivation
$D:S(M)\rightarrow S(M)$ is inner.
\end{theorem}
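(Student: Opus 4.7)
The plan is to reduce the theorem first to the case of a finite trace via the central decomposition recalled in the preliminaries, then to show in that setting that $D$ restricts to a bounded derivation of $M$ into itself, apply Sakai's theorem, and finally extend the implementing element from $M$ to all of $S(M)$ by $t_\tau$-density and continuity.

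For the reduction I would use the isomorphism $S(M)\cong\prod_{i\in I}S(z_iM)$, where $\{z_i\}$ is a family of mutually orthogonal central projections with $\tau(z_i)<\infty$ and $\bigvee_i z_i=\mathbf{1}$. For any central projection $z\in M$, Leibniz gives $D(z)=D(z^2)=2zD(z)$, so $(\mathbf{1}-2z)D(z)=0$; since $\mathbf{1}-2z$ is invertible in $S(M)$ with spectrum $\{\pm 1\}$, we conclude $D(z)=0$. Thus $D$ preserves each direct summand $z_iS(M)=S(z_iM)$ and restricts to a $t_{\tau_i}$-continuous derivation $D_i$ there. If each $D_i$ is implemented by some $a_i\in S(z_iM)$, then the element $a\in S(M)$ corresponding to $(a_i)_{i\in I}\in\prod_i S(z_iM)$ satisfies $D=[a,\,\cdot\,]$. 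Hence it suffices to treat the finite trace case $\tau(\mathbf{1})<\infty$, in which $S(M)=S(M,\tau)$ and $t=t_\tau$.

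In this finite-trace setting the heart of the proof is to show that $D(M)\subseteq M$ with $D|_M$ operator-norm bounded. Granted this, Sakai's theorem produces some $a\in M$ with $D|_M=[a,\,\cdot\,]$; the difference $\widetilde D:=D-[a,\,\cdot\,]$ is then $t_\tau$-continuous and vanishes on the $t_\tau$-dense subalgebra $M$, so $\widetilde D\equiv 0$ and $D=[a,\,\cdot\,]$. To establish this central step I would exploit the notation prepared just before the theorem: the ordered set $(\mathcal{GN}(M),\leq_1)$ of partial isometries and the $L^2$-norm $\|x\|_2=\sqrt{\tau(x^*x)}$. A natural approach is a Zorn-type argument on the collection of partial isometries $v\in\mathcal{GN}(M)$ for which $D(v)\in M$ with $\|D(v)\|$ controlled by a bound depending only on $v$: show that this family is inductively ordered by $\leq_1$, argue that any $\leq_1$-maximal $v_0$ must satisfy $v_0 v_0^*=\mathbf{1}$ (otherwise $v_0$ could be enlarged by a partial isometry orthogonal to its left support, using $t_\tau$-continuity of $D$ together with spectral truncation of $D(v_0)$ to control the enlargement), and deduce by linearity and the spectral theorem that $D(M)\subseteq M$ with a uniform operator-norm bound.

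Converting $t_\tau$-continuity into operator-norm control on $M$ is the principal obstacle. It is in this step that the finiteness of $\tau$, the abundance of partial isometries comparable under $\leq_1$ and $\leq_2$, and the $L^2$-norm become essential; once the obstacle is cleared, Sakai's theorem and the density extension described above conclude the proof.
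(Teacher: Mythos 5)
Your reduction to the finite-trace case (central projections are annihilated by $D$, the summands are handled separately, and the implementing elements are glued via $S(M)\cong\prod_i S(z_iM)$) is fine and matches the paper's Case~2. The fatal problem is the announced ``heart'' of the argument: the claim that a $t_\tau$-continuous derivation $D$ of $S(M)$ must satisfy $D(M)\subseteq M$ with $D|_M$ operator-norm bounded. This is false, even for inner derivations, which are automatically $t_\tau$-continuous. Take $M$ a II$_1$ factor and $a\in S(M)$ self-adjoint and unbounded; then $D_a$ is $t_\tau$-continuous, but if $D_a(M)\subseteq M$ then Sakai's theorem would give $b\in M$ with $D_a|_M=D_b|_M$, so $a-b$ would commute with $M$ and hence be a scalar, forcing $a\in M$, a contradiction. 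So no Zorn-type argument can establish your central step, and the subsequent plan (Sakai on all of $M$, then kill $D-[a,\cdot]$ by $t_\tau$-density of $M$) collapses with it; the conclusion you are aiming for at that stage would even contradict the theorem itself, since it would force every $t$-continuous derivation to be implemented by a \emph{bounded} element.

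What the paper does instead is strictly local. Its key Lemma (the analogue of your Zorn argument) works with the sets $\mathcal{F}_n=\{v\in\mathcal{GN}(M): vv^\ast D(v)v^\ast\geq n\,vv^\ast\}$, takes a $\leq_1$-maximal $v_n$ (continuity is used twice: to close the Zorn argument under $t_\tau$-limits of increasing nets, and to show $\tau(v_nv_n^\ast)\to 0$ via boundedness in measure of $\{xx^\ast D(x)x^\ast:\|x\|\leq 1\}$), and deduces only that on the corner $p_nMp_n$, with $\tau(\mathbf{1}-p_n)\to 0$, one has $\|vv^\ast D(v)v^\ast\|\leq n$ for unitaries, so that $p_nDp_n$ preserves $p_nMp_n$; a gluing with central projections and an invertible central element $c$ then produces a single faithful projection $p$ with $cpDp(pMp)\subseteq pMp$. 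Sakai's theorem is applied only on this corner, and the passage from ``$D-D_{c^{-1}a_p}-D_b$ vanishes on $pS(M)p$ for a faithful $p$'' to innerness on all of $S(M)$ is \emph{not} a density argument but a purely algebraic one (Lemmas~\ref{lem:eq}--\ref{lem:main}), exploiting halving and equivalence of projections in type II$_1$ algebras. If you want to salvage your outline, you must replace the global claim $D(M)\subseteq M$ by such a corner statement and then supply an algebraic mechanism for propagating innerness from a corner determined by a faithful projection to the whole algebra.
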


For the proof of this theorem we need several lemmata.

For $f\in \ker D$ and $x\in S(M)$ we have
$$
D(fx)=D(f)x+fD(x)=fD(x),
$$
i.e.
$$
D(fx)=fD(x).
$$
Likewise
$$
D(xf)=D(x)f.
$$
This simple properties  will be frequently used below.

Let $D$ be a derivation on $S(M).$ Let us define a mapping
$D^\ast:S(M)\rightarrow S(M)$  by setting
$$
D^\ast(x)=(D(x^\ast))^\ast,\, x\in S(M).
$$
 A direct verification
shows that $D^\ast$  is also a derivation on $S(M).$  A derivation
$D$ on $S(M)$  is said to be \textit{skew-hermitian}, if $D^\ast=-D,$ i.e.
$D(x^\ast)=-D(x)^\ast$ for all $x\in S(M).$ Every derivation $D$
on $S(M)$ can be represented in the form $D= D_1+ i D_2,$ where
$$
D_1 = (D -D^\ast)/2,\quad D_2  = (-D - D^\ast)/2i
$$
 are skew-hermitian derivations on $S(M).$

 It is clear that a derivation $D$  is inner  if and only if the skew-hermitian derivations
$D_1$  and $D_2$ are inner.

Therefore further we may assume that $D$ is a skew-hermitian
derivation.

\begin{lemma}\label{duu}  For every $v\in \mathcal{GN}(M)$
the element $vv^\ast D(v)v^\ast$ is hermitian.
\end{lemma}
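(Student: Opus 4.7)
Set $a = vv^*D(v)v^*$. Since $D$ is skew-hermitian we have $D(v)^* = -D(v^*)$, so
$$a^* = vD(v)^*vv^* = -vD(v^*)vv^*,$$
and the statement $a = a^*$ is equivalent to the identity $vv^*D(v)v^* + vD(v^*)vv^* = 0$.

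My plan is to exploit the defining identity $v = v(v^*v)$ of a partial isometry. Applying $D$ and the Leibniz rule gives
$$D(v) = D(v)\,v^*v + v\,D(v^*v),$$
so $vD(v^*v) = D(v)(\mathbf{1} - v^*v)$. Multiplying on the right by $v^*$ and using the partial-isometry relation $v^*vv^* = v^*$, the right hand side becomes $D(v)v^* - D(v)v^* = 0$, yielding the key identity
$$vD(v^*v)v^* = 0.$$

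Now I expand the left hand side via the Leibniz rule: $D(v^*v) = D(v^*)v + v^*D(v)$, hence
$$0 = vD(v^*v)v^* = vD(v^*)vv^* + vv^*D(v)v^* = -a^* + a,$$
where in the last step I used the skew-hermiticity relation recalled above. This gives $a = a^*$, which is exactly the claim.

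There is no real obstacle here — the whole argument is three applications of the Leibniz rule combined with the partial-isometry relations $v = vv^*v$ and $v^* = v^*vv^*$, plus the skew-hermiticity reduction that was set up just before the lemma. The only point requiring a little care is to multiply on the correct side ($v^*$ on the right, not the left) so that $v^*vv^* = v^*$ collapses the expression.
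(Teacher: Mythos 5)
Your proof is correct and follows essentially the same route as the paper: both arguments reduce the claim, via skew-hermiticity, to showing $vD(v^*v)v^*=0$ and then expand $D(v^*v)$ by the Leibniz rule. The only (harmless) difference is that you obtain $vD(v^*v)v^*=0$ directly by differentiating $v=v(v^*v)$ and using $(\mathbf{1}-v^*v)v^*=0$, whereas the paper routes this through the auxiliary fact $pD(p)p=0$ for the projection $p=v^*v$ together with $v=vv^*v$.
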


\begin{proof}
First note that if $p$ is a projection, then $pD(p)p=0$. Indeed, $D(p)=D(p^2)=D(p)p+pD(p)$.
Multiplying this equality by $p$  from both sides we obtain $pD(p)p=2pD(p)p$, i.e. $pD(p)p=0$.

Now take an arbitrary $v\in \mathcal{GN}(M).$ Taking into account that
$vv^\ast v=v$ and  $D$ is skew-hermitian, we get
\begin{eqnarray*}
\left(v v^\ast D(v) v^\ast\right)^\ast  &=& vD(v)^\ast v v^\ast=
-vD(v^\ast) v v^\ast=\\
& = & -vD(v^\ast v) v^\ast+   v v^\ast D(v) v^\ast= -v
\left(v^\ast vD(v^\ast v) v^\ast v\right) v^\ast+
\\
& + &    v v^\ast D(v) v^\ast= v v^\ast D(v) v^\ast,
\end{eqnarray*}
because $v^\ast v$ is a projection and therefore $v^\ast vD(v^\ast
v) v^\ast v=0.$ So
$$
\left(vv^\ast D(v) v^\ast\right)^\ast=vv^\ast D(v) v^\ast.
$$
The proof is complete.
\end{proof}

\begin{lemma}\label{five}  Let $n\in \mathbb{N}$ be a fixed number and
let  $v\in \mathcal{GN}(M)$ be  a partially isometry. Then
$$
vv^\ast D(v)v^\ast\geq n vv^\ast
$$
if and only if
$$
v^\ast vD(v^\ast) v\leq -n v^\ast v.
$$
\end{lemma}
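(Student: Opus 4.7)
The plan is to recognize the two inequalities as the same statement viewed through the ``corners'' cut out by the left and right supports of $v$. Set $p = vv^\ast$, $q = v^\ast v$, $A = vv^\ast D(v)v^\ast$, $B = v^\ast v D(v^\ast) v$. Using $vq = v$ and $v^\ast p = v^\ast$, the obvious checks give $A = pAp$ and $B = qBq$. Since $D$ is skew-hermitian, $D(v^\ast) = -D(v)^\ast$, and hence $B = -q\,D(v)^\ast v$.

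The key identity I would establish is
$$
B = -v^\ast A v.
$$
To prove it, I would use Lemma \ref{duu}, which gives $A = A^\ast = v D(v)^\ast v v^\ast$. Then
$$
v^\ast A v = v^\ast A^\ast v = v^\ast v\, D(v)^\ast\, v v^\ast v = q\,D(v)^\ast v\, q = q\,D(v)^\ast v,
$$
the last equality because $vq = v$. Comparing with the formula for $B$ yields $B = -v^\ast Av$. This step is the whole content of the lemma; the rest is order manipulation.

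With this identity in hand, both implications are immediate. For the forward direction, assume $A \geq n p$. Conjugating by $v^\ast$ preserves positivity, so
$$
v^\ast A v \geq n\, v^\ast p\, v = n\, (v^\ast v)(v^\ast v) = n q,
$$
whence $B = -v^\ast A v \leq -nq$. Conversely, from $B \leq -nq$ we get $v^\ast A v \geq nq$; conjugating by $v$ yields $vv^\ast A vv^\ast \geq n\,v q v^\ast = n p$, and since the left side is $pAp = A$, we conclude $A \geq np$.

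I do not expect a significant obstacle: the only non-routine input is Lemma \ref{duu}, which provides the self-adjointness of $A$ needed to switch between $D(v)$ and $D(v)^\ast$ inside $v^\ast A v$. Everything else follows from $vv^\ast v = v$, $v^\ast v v^\ast = v^\ast$, and the fact that $x \mapsto c x c^\ast$ preserves the positive cone of $S(M)$ (clear from $x = y^\ast y$ with $y = x^{1/2}$, a square root that exists in $S(M)$ for positive elements).
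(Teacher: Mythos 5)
Your proof is correct, and its skeleton is the same as the paper's: conjugate the inequality by $v^\ast$ (respectively $v$) and use the identity $v^\ast\left(vv^\ast D(v)v^\ast\right)v=-\,v^\ast vD(v^\ast)v$ to pass between the two corners, the converse coming from conjugating back and using $vv^\ast\, A\, vv^\ast=A$. The only genuine difference lies in how that key identity is obtained. The paper derives it from the Leibniz rule applied to $D(vv^\ast)$ together with the fact that $pD(p)p=0$ for any projection $p$; in particular its computation uses neither Lemma~\ref{duu} nor the skew-hermitian property, so the equivalence as proved there holds for an arbitrary derivation. You instead obtain it from Lemma~\ref{duu} (hermitianness of $vv^\ast D(v)v^\ast$) combined with $D(v^\ast)=-D(v)^\ast$; this is perfectly legitimate, since the lemma sits under the standing assumption that $D$ is skew-hermitian, but it makes your argument formally less general. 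On the other hand, your explicit handling of the converse direction (the same identity plus $pAp=A$) is a bit more economical than the paper's ``in a similar way'' remark, and your justification that $x\mapsto c^\ast xc$ preserves the positive cone of $S(M)$ via square roots is exactly the standard fact both directions rely on.
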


\begin{proof}
Take an arbitrary $v\in \mathcal{GN}(M)$ such that $ vv^\ast
D(v)v^\ast\geq n vv^\ast.$ Multiplying this equality from the left
side by $v^\ast$ and from the right side by $v,$ we obtain
$$
v^\ast vv^\ast D(v)v^\ast v\geq n v^\ast v v^\ast v,
$$
i.e.
$$ v^\ast  D(v)v^\ast v\geq n v^\ast  v.
$$
Since
\begin{eqnarray*}
v^\ast D(v)v^\ast v   &=&  v^\ast D(v v^\ast) v-    v^\ast v
D(v^\ast)v = v^\ast \left(v v^\ast D(v v^\ast) v v^\ast\right) v-
\\
& - &    v^\ast v D(v^\ast)v= -v^\ast v D(v^\ast)v,
\end{eqnarray*}
because $v v^\ast$ is a projection and therefore $v v^\ast D(v
v^\ast) v v^\ast =0.$ So
$$
-v^\ast v D(v^\ast)v\geq n v^\ast v,
$$
i.e.
$$
v^\ast v D(v^\ast)v\leq -n v^\ast v.
$$
In a similar way we can prove the converse implication.
  The proof is complete.
\end{proof}

\begin{lemma}\label{orto}  Let   $v_1\in \mathcal{GN}(M)$ be
a partially isometry  and let  $v_2\in \mathcal{GN}(pMp),$ where
\linebreak  $p=\mathbf{1}- v_1v_1^\ast \vee v_1^\ast v_1 \vee
s(iD(v_1v_1^\ast))\vee s(iD(v_1^\ast v_1))$ and $s(x)$ denotes the
support of a hermitian element $x.$ Then
$$
 (v_1+v_2)(v_1+v_2)^\ast D(v_1+v_2)(v_1+v_2)^\ast=v_1v_1^\ast D(v_1)v_1^\ast
+v_2v_2^\ast D(v_2)v_2^\ast.
$$
\end{lemma}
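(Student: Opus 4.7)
The plan is to expand both sides and verify that six cross terms vanish. First I would collect the orthogonality implications of the hypothesis on $p$. Since $p \leq \mathbf{1}-v_1v_1^{\ast}$ and $p \leq \mathbf{1}-v_1^{\ast}v_1$, we get $pv_1=v_1p=0$ and $pv_1^{\ast}=v_1^{\ast}p=0$; combined with $v_2=pv_2p$ this produces all the mutual vanishing relations $v_1v_2=v_2v_1=v_1v_2^{\ast}=v_2^{\ast}v_1=v_2v_1^{\ast}=v_1^{\ast}v_2=0$ (and their adjoints), and in particular $(v_1v_1^{\ast})(v_2v_2^{\ast})=0$. From this we immediately obtain $(v_1+v_2)(v_1+v_2)^{\ast}=v_1v_1^{\ast}+v_2v_2^{\ast}$.

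Next I would harvest the support hypothesis. Setting $q_1=s(iD(v_1v_1^{\ast}))$ and $q_2=s(iD(v_1^{\ast}v_1))$, the skew-hermitian property makes $iD(v_1v_1^{\ast})$ and $iD(v_1^{\ast}v_1)$ self-adjoint, so $D(v_1v_1^{\ast})=q_1 D(v_1v_1^{\ast})q_1$ and similarly for $D(v_1^{\ast}v_1)$. Since $p \leq \mathbf{1}-q_j$ forces $q_j p=pq_j=0$ for $j=1,2$, the relations $v_2=pv_2p$ and $v_2^{\ast}=pv_2^{\ast}p$ yield
\[
v_2 D(v_1v_1^{\ast})=D(v_1v_1^{\ast})v_2=v_2^{\ast}D(v_1v_1^{\ast})=D(v_1v_1^{\ast})v_2^{\ast}=0,
\]
and the same four identities with $v_1^{\ast}v_1$ in place of $v_1v_1^{\ast}$.

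Then I would expand the left-hand side using $D(v_1+v_2)=D(v_1)+D(v_2)$ and $(v_1+v_2)(v_1+v_2)^{\ast}=v_1v_1^{\ast}+v_2v_2^{\ast}$ into eight terms, keep $v_1v_1^{\ast}D(v_1)v_1^{\ast}$ and $v_2v_2^{\ast}D(v_2)v_2^{\ast}$, and show the remaining six vanish. The four terms containing $v_1v_1^{\ast}D(v_2)$ or $v_2v_2^{\ast}D(v_1)$ die as follows: differentiating $v_2=pv_2$ gives $D(v_2)=D(p)v_2+pD(v_2)$, so $v_1v_1^{\ast}D(v_2)=v_1v_1^{\ast}D(p)v_2$; differentiating the relation $v_1v_1^{\ast}\,p=0$ gives $v_1v_1^{\ast}D(p)=-D(v_1v_1^{\ast})p$, and thus $v_1v_1^{\ast}D(v_2)=-D(v_1v_1^{\ast})pv_2=-D(v_1v_1^{\ast})v_2=0$ by the support identity above. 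The symmetric calculation $v_2v_2^{\ast}D(v_1)=0$ uses $v_1=v_1v_1^{\ast}v_1$ together with $v_2v_2^{\ast}v_1v_1^{\ast}=0$ and $v_2^{\ast}D(v_1v_1^{\ast})=0$.

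For the final two cross terms $v_1v_1^{\ast}D(v_1)v_2^{\ast}$ and $v_2v_2^{\ast}D(v_2)v_1^{\ast}$, I would use the decompositions $v_1=v_1\,v_1^{\ast}v_1$ (respectively the identity $v_2v_1^{\ast}=0$) and the Leibniz rule to reduce each to a multiple of $D(v_1^{\ast}v_1)v_2^{\ast}$ or $v_2D(v_1^{\ast}v_1)$, both of which vanish by the support identity involving $q_2$. Collecting the surviving two terms yields the claim. The main obstacle is purely organizational: systematically matching each cross term to an occurrence of $D(v_1v_1^{\ast})$ or $D(v_1^{\ast}v_1)$ flanked by $v_2$ or $v_2^{\ast}$, which is precisely what the support clauses in the definition of $p$ are designed to kill.
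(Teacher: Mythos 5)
Your proposal is correct and follows essentially the same route as the paper: expand $(v_1+v_2)(v_1+v_2)^\ast D(v_1+v_2)(v_1+v_2)^\ast$ into eight terms using $(v_1+v_2)(v_1+v_2)^\ast=v_1v_1^\ast+v_2v_2^\ast$, and kill the six cross terms by combining the orthogonality relations forced by $v_2\in\mathcal{GN}(pMp)$ with the Leibniz rule and the identities $v_2 D(v_1v_1^\ast)=D(v_1v_1^\ast)v_2^\ast=v_2D(v_1^\ast v_1)=D(v_1^\ast v_1)v_2^\ast=0$ coming from the support clauses in the definition of $p$. Your only deviations (grouping the four terms through $v_2v_2^\ast D(v_1)=0$ and $v_1v_1^\ast D(v_2)=0$, the latter via differentiating $v_1v_1^\ast p=0$ rather than $v_1v_1^\ast v_2=0$) are cosmetic reorganizations of the same computation.
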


\begin{proof} Since $v_2\in
\mathcal{GN}(pMp)$ we get
$$
v_1v_2^\ast=v_2^\ast v_1=v_2v_1^\ast=v_1^\ast v_2=0,
$$
$$
v_2^\ast D(v_1v_1^\ast)= D(v_1v_1^\ast)v_2=v_2D(v_1^\ast v_1) =
D(v_1^\ast v_1)v_2^\ast=0.
$$
Thus
$$
v_1v_1^\ast D(v_2)=D(v_1v_1^\ast v_2)-D(v_1v_1^\ast)v_2=0,
$$
$$
v_2^\ast D(v_1)v_2^\ast =D(v_2^\ast v_1) v_2^\ast - D(v_2^\ast)
v_1v_2^\ast=0,
$$
$$
v_1^\ast D(v_1)v_2^\ast =D(v_1^\ast v_1) v_2^\ast - D(v_1^\ast)
v_1v_2^\ast=0,
$$
$$
v_2^\ast D(v_1)v_1^\ast =v_2^\ast D(v_1 v_1^\ast) - v_2^\ast v_1
D(v_1^\ast)=0,
$$
$$
D(v_2)v_1^\ast =D(v_2)v_1^\ast v_1 v_1^\ast=D(v_2v_1^\ast
v_1)v_1^\ast- v_2 D(v_1^\ast v_1) v_1^\ast=0.
$$
 Taking into account these equalities we get
\begin{eqnarray*}
(v_1+v_2)(v_1+v_2)^\ast D(v_1+v_2)(v_1+v_2)^\ast &=&
(v_1v_1^\ast+v_2v_2^\ast) D(v_1+v_2)(v_1+v_2)^\ast=
\\
& = &  v_1v_1^\ast D(v_1)v_1^\ast +
  v_2v_2^\ast D(v_2)v_2^\ast+
\\
& + &  v_1v_1^\ast D(v_2)v_1^\ast +
  v_2v_2^\ast D(v_1)v_2^\ast+
  \\
& + &  v_1v_1^\ast D(v_1)v_2^\ast +
  v_2v_2^\ast D(v_2)v_1^\ast+
  \\
& + &  v_1v_1^\ast D(v_2)v_2^\ast +
  v_2v_2^\ast D(v_1)v_1^\ast=
  \\
& = &  v_1v_1^\ast D(v_1)v_1^\ast +
  v_2v_2^\ast D(v_2)v_2^\ast.
  \end{eqnarray*}
The proof is complete. \end{proof}

Let $p\in M$ be a projection. It is clear that the mapping
$$
pDp:x\rightarrow pD(x)p,\, x\in pS(M)p
$$
is a derivation on $pS(M)p=S(pMp).$

 The
following lemma is  one of the key steps in the  proof of the main
result.

\begin{lemma}\label{boun}  Let $M$ be a
von Neumann algebra  with a faithful normal finite trace $\tau$,
  $\tau(\mathbf{1})=1.$ There exists a sequence of projections
$\{p_n\}$ in $M$ with $\tau(\mathbf{1}-p_n)\rightarrow 0$ such
that the derivation $p_nDp_n$ maps $p_nMp_n$ into itself for all
$n\in \mathbb{N}.$
\end{lemma}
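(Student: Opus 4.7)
The plan is to argue by contradiction. Suppose no such sequence exists; then there is $\delta_0 > 0$ such that whenever $p \in P(M)$ satisfies $\tau(\mathbf{1} - p) < \delta_0$, the derivation $pDp$ fails to send $pMp$ into $M$. As a preliminary step I would apply the closed graph theorem to $D|_M : (M, \|\cdot\|) \to (S(M), t_\tau)$ --- the graph is closed because operator-norm convergence implies $t_\tau$-convergence and $D$ is $t_\tau$-continuous --- to conclude that $D(B_M)$ is $t_\tau$-bounded: for every $\eta > 0$ there is $C_\eta > 0$ with $D(B_M) \subseteq V(C_\eta, \eta)$.

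I would then inductively construct partial isometries $v_n \in \mathcal{GN}(M)$ satisfying (i) the projections $\pi_n := v_n v_n^* \vee v_n^* v_n \vee s(iD(v_n v_n^*)) \vee s(iD(v_n^* v_n))$ are pairwise orthogonal, (ii) $v_n v_n^* D(v_n) v_n^* \geq 4^n v_n v_n^*$ (or its negative analog), and (iii) $\tau(\pi_n) < \delta_0/2^{n+1}$, so that the remaining projection $p_n = \mathbf{1} - \sum_{j<n} \pi_j$ always satisfies $\tau(p_n^\perp) < \delta_0$ and the contradiction hypothesis applies. At step $n$, the failure of $p_n D p_n$ to land in $M$, combined with the fact that unit-ball elements of a finite von Neumann algebra lie in the closed convex hull of unitaries, yields a unitary $u \in p_n M p_n$ with $p_n D(u) p_n \notin M$. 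Lemma~\ref{duu} makes $uu^* D(u) u^*$ hermitian, and its unboundedness produces a spectral projection $q \leq uu^*$ on which $q(uu^* D(u) u^*) q \geq 4^n q$. Setting $v_n := q u$ then gives (ii).

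The main obstacle is securing (i) and (iii) simultaneously: $t_\tau$-continuity makes $D(v_n v_n^*)$ small in $t_\tau$ when $\tau(v_n v_n^*)$ is small, but this does \emph{not} control $\tau(s(iD(v_n v_n^*)))$ --- a small, unbounded element of $S(M)$ may have support of any trace. I would resolve this by truncating $q$ further inside its spectral projection: since $D(q)$ and $D(u^*qu)$ lie in $S(M)$, each admits a projection of arbitrarily small $\tau$-complement on which it is bounded; cutting $q$ down to a sub-projection contained in such projections reduces the supports $s(iD(\cdot))$ to projections of trace $< \delta_0/2^{n+1}$, while Lemma~\ref{five} and the form of the cut-down keep (ii) intact. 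This interlacing of spectral projections of the hermitian $uu^* D(u) u^*$ with projections arising from the measurability of $D$-images is the technically demanding part of the induction.

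Finally, I would set $w := \sum_{n\geq 1} v_n$, which converges in $t_\tau$ (each $v_n \to 0$ since $\tau(v_n^* v_n) \to 0$) and is a partial isometry of operator norm $\leq 1$ by the orthogonality of its left and right supports, so $w \in B_M$. By $t_\tau$-continuity of $D$ and iterated application of Lemma~\ref{orto}, one obtains for every $k$
\[
v_k v_k^* D(w) v_k^* \;=\; v_k v_k^* D(v_k) v_k^* \;\geq\; 4^k v_k v_k^*.
\]
The preliminary step applied with $\eta = \delta_0/2$ yields a projection $e$ with $\tau(e^\perp) < \delta_0/2$ and $\|D(w) e\| \leq C$. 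Splitting $v_k v_k^* D(w) v_k^* = v_k v_k^* D(w) e v_k^* + v_k v_k^* D(w) e^\perp v_k^*$ bounds the first summand by $C$ in operator norm, forcing $\|v_k v_k^* D(w) e^\perp v_k^*\| \geq 4^k - C$; combining this unbounded growth on corners of vanishing right-support trace with the uniform $t_\tau$-boundedness of $D(w)$ (via the estimates $D(w) \in V(C_\eta, \eta)$ for arbitrary $\eta > 0$) delivers the desired contradiction.
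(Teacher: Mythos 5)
Your overall architecture differs from the paper's (the paper fixes $n$, uses Zorn's lemma to produce a \emph{maximal} partial isometry $v_n$ with $v_nv_n^*D(v_n)v_n^*\ge n\,v_nv_n^*$, deduces from maximality that $\|vv^*D(v)v^*\|\le n$ for all unitaries $v$ of the corner $p_nMp_n$, and only afterwards proves $\tau(\mathbf{1}-p_n)\to 0$), and unfortunately your version has two genuine gaps. First, your proposed control of $\tau\bigl(s(iD(v_nv_n^*))\bigr)$ does not work: cutting $q$ down to a subprojection on whose complement's side $D(q)$ (or $D(u^*qu)$) acts boundedly says nothing about the \emph{trace of the support} of the derivative of the new projection --- a bounded operator can have support of full trace, so ``bounded on a projection of small complement'' never yields ``support of small trace''. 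The tool that actually delivers this (and which the paper uses) is purely algebraic: since $q$ is a projection, $D(q)=D(q)q+qD(q)$, hence $s(iD(q))\le q\vee l\bigl(iD(q)q\bigr)\vee r\bigl(q\,iD(q)\bigr)$ and $\tau\bigl(s(iD(q))\bigr)\le 3\tau(q)$; combined with raising the spectral cutoff to make $\tau(q)$ small, this would repair step (i)--(iii) of your induction, but it is absent from your argument.

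Second, and more seriously, the terminal contradiction evaporates precisely because of the geometric budget $\tau(\pi_n)<\delta_0/2^{n+1}$ that your induction needs. Once $w$ is fixed, the estimates $D(w)\in V(C_\eta,\eta)$ for all $\eta>0$ are nothing more than the statement that $D(w)$ is a measurable operator --- every element of $S(M,\tau)$ satisfies them --- so the uniformity of $D(B_M)$ over the unit ball buys you nothing for the single element $D(w)$. And the inequalities $v_kv_k^*D(w)v_k^*\ge 4^k v_kv_k^*$ with $\tau(v_kv_k^*)\to 0$ are perfectly consistent with $D(w)$ being one fixed unbounded measurable operator: already in $L^\infty[0,1]$ the function $h(t)=1/t$ satisfies $\chi_{E_k}h\chi_{E_k}\ge 4^k\chi_{E_k}$ for the disjoint sets $E_k=(4^{-k-1},4^{-k}]$ of measure tending to $0$, yet $h$ is measurable and $\{h\}$ is bounded in the measure topology. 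The paper's contradiction works only because its maximality construction produces, under the assumption $\tau(v_nv_n^*)\not\to 0$, corners of trace bounded \emph{below} by a fixed $\varepsilon>0$ on which the compressions exceed $n_k$; largeness on projections of trace $\ge\varepsilon$ is exactly what makes the family $\{v_{n_k}v_{n_k}^*D(v_{n_k})v_{n_k}^*\}$ unbounded in the measure topology and contradicts the $t_\tau$-continuity of $D$. Your shrinking-corner induction destroys precisely this feature, so the final step cannot be salvaged as written; you would need either the paper's maximality mechanism or some other device that keeps the trace of the ``bad'' corners uniformly bounded away from zero.
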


\begin{proof}
For each  $n\in \mathbb{N}$ consider the set
$$
\mathcal{F}_n=\{v\in \mathcal{GN}(M): vv^\ast D(v)v^\ast\geq n
vv^\ast\}.
$$

Note that $0\in \mathcal{F}_n$,  so $\mathcal{F}_n$ is not empty.
Let us show that the set $\mathcal{F}_n$ has a maximal element
with respect to the order $\leq_1.$

Let $\{v_\alpha\}\subset \mathcal{F}_n$ be a totally ordered net.
We will show that $v_\alpha \stackrel{t_\tau}\longrightarrow v$
for some $v \in \mathcal{F}_n.$ For $\alpha\leq \beta$ we have
\begin{eqnarray*}
\|v_\beta-v_\alpha\|_2 & =& \|l(v_\beta)v_\beta-l(v_\alpha)v_\beta\|_2= \\
& = & \|(l(v_\beta)-l(v_\alpha))v_\beta\|_2\leq
\|l(v_\beta)-l(v_\alpha)\|_2\|v_\beta\|= \\
& = & \sqrt{\tau(l(v_\beta)-l(v_\alpha))}\rightarrow 0,
\end{eqnarray*}
because $\{l(v_\alpha)\}$ is an increasing net of projections.
Thus $\{v_\alpha\}$ is a $\|\cdot\|_2$-fundamental, and hence
there exists an element $v$ in the unit ball $M$ such that
$v_\alpha\stackrel{\|\cdot\|_2}\longrightarrow v.$ Therefore
$v_\alpha\stackrel{t_\tau}\longrightarrow v,$ and thus we have
$$
v_\alpha  v_\alpha^\ast \stackrel{t_\tau}\longrightarrow v
v^\ast,\, v_\alpha^\ast v_\alpha\stackrel{t_\tau}\longrightarrow
v^\ast  v.
$$
 Therefore
$$
v v^\ast,  \, v^\ast v\in P(M).
$$
Thus  $v\in \mathcal{GN}(M).$

Since  $\{v_\alpha v_\alpha^\ast\}$ is an increasing net of
projections it follows that $v_\alpha  v_\alpha^\ast \uparrow v
v^\ast.$ Also, $v_\alpha=v_\alpha v_\alpha^\ast v_\beta$ for all
$\beta\geq \alpha$ implies that $v_\alpha=v_\alpha v_\alpha^\ast
v.$ So $v_\alpha\leq_1 v$ for all $\alpha.$ Since
$v_\alpha\stackrel{t_\tau}\longrightarrow v$ by  ${t_\tau}$-continuity of $D$ we have that
$D(v_\alpha)\stackrel{t_\tau}\longrightarrow D(v).$ Taking into
account that $v_\alpha v_\alpha^\ast D(v_\alpha)v_\alpha^\ast\geq
n v_\alpha v_\alpha^\ast$ we obtain $v v^{\ast} D(v)v^{\ast}\geq n
v v^{\ast},$ i.e. $v\in \mathcal{F}_n.$

So, any totally ordered net  in $\mathcal{F}_n$ has the least upper
bound. By Zorn`s Lemma $\mathcal{F}_n$ has a maximal element, say
$v_n.$

Put
$$
p_n=\mathbf{1}- v_nv_n^\ast \vee v_n^\ast v_n \vee
s(iD(v_nv_n^\ast))\vee s(iD(v_n^\ast v_n)).
$$

Let us prove that
$$
\|vv^\ast D(v)v^\ast\|\leq n
$$
for all $v\in \mathcal{U}(p_nMp_n).$

The case  $p_n=0$  is trivial.

Let us consider the case $p_n\neq 0.$ Take $v\in
\mathcal{U}(p_nMp_n).$ Let $v v^\ast
D(v)v^\ast=\int\limits_{-\infty}^{+\infty}\lambda \, d\,
e_{\lambda}$ be the spectral resolution of $v v^\ast D(v)v^\ast.$
 Assume that $p=e_n^\perp\neq 0.$ Then
$$
p v v^\ast D(v)v^\ast p\geq n p.
$$
Denote $u=pv.$ Then since $p\leq p_n = vv^\ast,$  we have
\begin{eqnarray*}
u u^\ast D(u)u^\ast  & =& p v v^\ast p D(pv)v^\ast p= \\
& = & p v v^\ast p D(p)vv^\ast p +p v v^\ast p p D(v)v^\ast p=\\
& = & p v v^\ast p D(p) p vv^\ast  +p v v^\ast D(v)v^\ast p =\\
& = &0+ p v v^\ast D(v)v^\ast p\geq n p,
\end{eqnarray*}
i.e.
$$
uu^\ast  D(u)u^\ast \geq n p.
$$
Since $u u^\ast, u^\ast u\leq p_n=\mathbf{1}- v_nv_n^\ast \vee
v_n^\ast v_n \vee s(iD(v_nv_n^\ast))\vee s(iD(v_n^\ast v_n))$ it
follows that $u$ is orthogonal to $v_n,$ i.e.
$uv_n^{\ast}=v_n^{\ast}u=0.$ Therefore  $w=v_n+u\in
\mathcal{GN}(M).$ Using  Lemma~\ref{orto} we have
\begin{eqnarray*}
w w^\ast D(w)w^\ast & =& v_n v_n^{\ast}  D(v_n)v_n^{\ast} +u
u^\ast D(u)u^\ast\geq n(v_nv_n^\ast+p)=nww^\ast,
\end{eqnarray*}
because
\begin{eqnarray*}
w w^\ast  & =& (v_n+u)(v_n +u)^{\ast}=v_nv_n^{\ast} +u u^\ast=\\
& =& v_nv_n^\ast+pvv^\ast p=v_nv_n^\ast+p.
\end{eqnarray*}
So
$$
w w^\ast D(w)w^\ast \geq n ww^\ast.
$$
This is contradiction with maximality $v_n^.$ From this
contradiction it follows that $e_n^\perp=0.$ This means that
$$
v v^\ast D(v)v^\ast\leq n v v^\ast
$$
for all $v\in \mathcal{U}(p_nMp_n).$

Set
$$
\mathcal{S}_n=\{v\in \mathcal{GN}(M): vv^\ast D(v)v^\ast\leq -n
vv^\ast\}.
$$
By Lemma~\ref{five} it follows that $v\in \mathcal{F}_n$ is a
maximal element of $\mathcal{F}_n$ with respect to the order
$\leq_1$ if and only if $v^\ast$ is a maximal element of
$\mathcal{S}_n$ with respect to the order $\leq_2.$

Taking into account this observation  in a similar way we can show
that
$$
v v^\ast D(v)v^\ast\geq -n v v^\ast
$$
for all $v\in \mathcal{U}(p_nMp_n).$ So
$$
-n v v^\ast\leq v v^\ast D(v)v^\ast\leq n v v^\ast.
$$
This implies that $v v^\ast D(v)v^\ast\in M$ and
\begin{equation}\label{noreq}
 \|vv^\ast D(v)v^\ast\|\leq n
\end{equation}
for all $v\in \mathcal{U}(p_nMp_n).$

Let us show that the derivation $p_nDp_n$
 maps $p_nMp_n$ into itself. Take $v\in \mathcal{U}(p_nMp_n).$ Then $vv^\ast=v^\ast v=p_n$ and
hence
\begin{eqnarray*}
(p_nDp_n)(v)=p_nD(p_n v p_n)p_n  & =& vv^\ast D(v)v^\ast v\in
p_nMp_n.
\end{eqnarray*}
Since any element from $p_nMp_n$ is a finite linear combination of
unitaries from $\mathcal{U}(p_nMp_n)$ it follows that
\begin{eqnarray*}
p_n D(x)p_n \in p_nMp_n
\end{eqnarray*}
for all $x\in p_nMp_n,$ i.e. the derivation $p_nDp_n$
 maps $p_nMp_n$ into itself.

Let us show that $\tau(v_n v_n^\ast)\rightarrow 0.$
 Let us suppose the opposite, e.g.
there exist a number $\varepsilon>0$ and  a sequence
$n_1<n_2<...<n_k<...$ such that
$$
\tau(v_{n_k} v_{n_k}^\ast)\geq \varepsilon
$$
for all $k\geq1.$  Since $v_{n_k}\in \mathcal{F}_{n_k}$  we have
\begin{equation}\label{inq}
 v_{n_k}v_{n_k}^{\ast} D(v_{n_k})v_{n_k}^{\ast} \geq n_k v_{n_k}v_{n_k}^{\ast}
 \end{equation}
for all $k\geq1.$

Now take an arbitrary number $c>0$ and let $n_k$ be a number such
that $n_k>c\delta,$ where $\delta=\frac{\textstyle
\varepsilon}{\textstyle 2}.$ Suppose that
$$
v_{n_k}v_{n_k}^{\ast} D(v_{n_k})v_{n_k}^{\ast}\in
cV\left(\delta,\delta\right)= V\left(c\delta,\delta\right).
$$
Then there exists a projection $p\in M$ such that
\begin{equation}\label{eps}
||v_{n_k}v_{n_k}^{\ast}  D(v_{n_k})v_{n_k}^{\ast} p||<c\delta,\,
\tau(p^\perp)<\delta.
\end{equation}
Let $v_{n_k}v_{n_k}^{\ast}
D(v_{n_k})v_{n_k}^{\ast}=\int\limits_{-\infty}^{+\infty}\lambda \,
d\, e_{\lambda}$ be the spectral resolution of
$v_{n_k}v_{n_k}^{\ast}  D(v_{n_k})v_{n_k}^{\ast}.$ From
\eqref{eps} using \cite[Lemma 2.2.4]{Mur} we obtain that
$e^\perp_{c\delta}\preceq p^\perp.$ Taking into account
\eqref{inq} we have that $v_{n_k}v_{n_k}^{\ast}\leq
e^\perp_{n_k}.$ Since $n_k>c\delta$ it follows that
$e^\perp_{n_k}\leq e^\perp_{c\delta}.$ So
$$
v_{n_k}v_{n_k}^{\ast} \leq e^\perp_{n_k}\leq
e^\perp_{c\delta}\preceq p^\perp.
$$
Thus
$$
\varepsilon\leq \tau(v_{n_k}v_{n_k}^{\ast} )\leq
\tau(p^\perp)<\delta=\frac{\varepsilon}{2}.
$$
This contradiction implies that
$$
v_{n_k}v_{n_k}^{\ast} D(v_{n_k})v_{n_k}^{\ast}\notin
cV\left(\delta,\delta\right)
$$
for all $n_k>c\delta.$ Since $c>0$ is arbitrary it follows that
the sequence $\{v_{n_k}v_{n_k}^{\ast}
D(v_{n_k})v_{n_k}^{\ast}\}_{k\geq1}$ is unbounded in the measure
topology. Therefore the set $\{vv^\ast D(v)v^\ast: v\in
\mathcal{GN}(M)\}$ is also unbounded in the measure topology.

 On the other hand, the continuity of the derivation $D$
implies that the set $\{xx^\ast D(x)x^\ast: \|x\|\leq 1\}$ is
bounded in the measure topology. In particular,  the set
$\{uu^\ast D(u)u^\ast: u\in \mathcal{GN}(M)\}$ is also bounded in
the measure topology. This contradiction implies that $\tau(v_n
v_n^\ast)\rightarrow 0.$

Finally let us show that
$$
\tau(\mathbf{1}-p_n) \rightarrow 0.
$$

It is clear that
$$
l(iD(v_nv_n^\ast)v_nv_n^\ast)\preceq v_nv_n^\ast,
$$
$$
r(v_nv_n^\ast iD(v_nv_n^\ast))\preceq v_nv_n^\ast.
$$
Since
$$
D(v_nv_n^\ast)=D(v_nv_n^\ast)v_nv_n^\ast+v_nv_n^\ast
D(v_nv_n^\ast)
$$
we have
\begin{eqnarray*}
\tau(s(iD(v_nv_n^\ast))) & =&
\tau(s(iD(v_nv_n^\ast)v_nv_n^\ast+v_nv_n^\ast
iD(v_nv_n^\ast))\leq \\
& \leq & \tau(s(v_nv_n^\ast)\vee l(iD(v_nv_n^\ast)v_nv_n^\ast)\vee
r(v_nv_n^\ast iD(v_nv_n^\ast)))\leq\\
& \leq & \tau(v_n v_n^\ast)+\tau(v_n v_n^\ast)+\tau(v_n
v_n^\ast)=3\tau(v_n v_n^\ast),
\end{eqnarray*}
i.e.
$$
\tau(s(iD(v_nv_n^\ast))) \leq 3\tau(v_n v_n^\ast).
$$
Similarly
$$
\tau(s(iD(v_n^\ast v_n))) \leq 3\tau(v_n^\ast v_n).
$$
Now taking into account that
$$
v_n v_n^\ast \sim v_n^\ast v_n
$$
we obtain
\begin{eqnarray*}
\tau(\mathbf{1}-p_n) & =& \tau(v_nv_n^\ast \vee v_n^\ast v_n \vee
s(iD(v_nv_n^\ast))\vee s(iD(v_n^\ast v_n)))\leq \\
& \leq & \tau(v_n v_n^\ast)+\tau(v_n^\ast v_n)+3\tau(v_n v_n^\ast)+3\tau(v_n^\ast v_n)=\\
& = & 8\tau(v_n v_n^\ast)\rightarrow 0,
\end{eqnarray*}
i.e.
$$
\tau(\mathbf{1}-p_n) \rightarrow 0.
$$
The proof is complete.
\end{proof}

Let $c\in S(M)$ be a central element. It is clear that the mapping
$$
cD:x\rightarrow cD(x),\, x\in S(M)
$$
is a derivation on $S(M).$

\begin{lemma}\label{last}  Let $M$ be a
von Neumann algebra  with a faithful normal finite trace $\tau$,
 $\tau(\mathbf{1})=1.$ There exist an invertible central
element $c\in S(M)$ and a faithful projection $p\in M$  such that
the derivation $cpDp$ maps $pMp$ into itself.
\end{lemma}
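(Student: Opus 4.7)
The plan is to upgrade the sequence $\{p_n\}$ produced by Lemma~\ref{boun} to a single \emph{faithful} projection $p$, at the cost of introducing a central rescaling $c$ to absorb the $n$-dependent norm bound from inequality~\eqref{noreq}. Apply Lemma~\ref{boun} to obtain projections $p_n\in M$ with $\tau(\mathbf{1}-p_n)\to 0$ such that $p_nDp_n$ maps $p_nMp_n$ into itself; let $k_n$ be a norm bound on this restriction extracted from~\eqref{noreq}. Let $y_n$ denote the central support of $p_n$; since $\mathbf{1}-y_n\leq \mathbf{1}-p_n$ we have $\tau(\mathbf{1}-y_n)\to 0$, hence $\bigvee_n y_n=\mathbf{1}$. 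Form mutually orthogonal central projections $Y_1:=y_1$ and $Y_n:=y_n-\bigvee_{k<n}y_k$ for $n\geq 2$, and set $q_n:=Y_np_n$; each $q_n$ is a projection in $M$ with central support $Y_n$. Then let
\[
p:=\sum_n q_n,\qquad c:=\sum_n \frac{1}{k_n}\,Y_n.
\]
Since the central supports of the $q_n$ sum to $\sum_n Y_n=\mathbf{1}$, the projection $p$ is faithful, while $c$ lies in $S(Z(M))\subseteq S(M)$ and is invertible (with inverse $\sum_n k_n Y_n$).

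To verify that $cpDp$ sends $pMp$ into $pMp$, first note that $D$ vanishes on every central projection $Y\in S(M)$: from $Y=Y^2$ and $YD(Y)=D(Y)Y$ one gets $D(Y)=2YD(Y)$, and multiplying on the left by $Y$ yields $YD(Y)=2YD(Y)$, whence $YD(Y)=0$, so $D(Y)=0$. In particular $D(Y_n)=0$, so $D$ preserves each central summand $Y_nS(M)Y_n$. Combined with Lemma~\ref{boun} (and $q_n=Y_np_n\leq p_n$), this yields, for each $y\in q_nMq_n$,
\[
pD(y)p=q_nD(y)q_n\in q_nMq_n\subseteq pMp,\qquad \|q_nD(y)q_n\|\leq k_n\|y\|.
\]
For general $x\in pMp$, centrality and orthogonality of the $Y_n$ give the diagonal decomposition $x=\sum_n q_nxq_n$; the $t$-continuity of $D$ then yields
\[
cpD(x)p=\sum_n \frac{1}{k_n}\,q_nD(q_nxq_n)q_n,
\]
where each summand has norm at most $\|x\|$. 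Mutual orthogonality of the $q_n$ forces this series to converge in norm to an element of $pMp$.

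The main obstacle is arranging the faithfulness of $p$ simultaneously with absorbing the $n$-dependent bounds $k_n$ into a single invertible central element of $S(M)$. The central-support slicing $Y_n=y_n-\bigvee_{k<n}y_k$ takes care of faithfulness, the rescaling by $c$ takes care of boundedness, and the algebraic identity $D(Y)=0$ for central projections (an immediate consequence of the Leibniz rule) is what allows these two constructions to cooperate without interfering.
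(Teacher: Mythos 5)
Your construction is essentially the paper's own proof: the paper likewise takes the central supports $z_n=c(p_n)$, disjointifies them into mutually orthogonal central projections $f_n$, sets $p=\sum_n f_np_n$ and $c=\sum_n n^{-1}f_n$, and verifies the boundedness of $cpDp$ slice-by-slice exactly as you do, so your argument is correct in substance. Two small repairs are needed: $Y_n$ must be defined as $y_n\wedge\bigl(\bigvee_{k<n}y_k\bigr)^{\perp}$ (the literal difference $y_n-\bigvee_{k<n}y_k$ need not be a projection, since $y_n$ need not dominate $\bigvee_{k<n}y_k$), and the series $\sum_n k_n^{-1}q_nD(q_nxq_n)q_n$ converges only in the strong operator topology, not in norm; what is actually used is that an element of $S(M)$ whose central slices are uniformly norm-bounded lies in $M$ with norm equal to $\sup_n$ of the slice norms, which is how the paper phrases this step.
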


\begin{proof}
By Lemma~\ref{boun} there exists a sequence of projections
$\{p_n\}\subset  M$ with $\tau(\mathbf{1}-p_n) \rightarrow 0$ such
that the derivation $p_nDp_n$ maps $p_nMp_n$ into itself for all
$n\in \mathbb{N}.$ By \eqref{noreq} we have
\begin{equation}\label{nnn}
\|v_nv_n^\ast D(v_n)v_n^\ast\|\leq n
\end{equation}
for all $v_n\in \mathcal{U}(p_nMp_n).$

 Let  $z_n=c(p_n)$ be
the central support of $p_n,$ $n\in \mathbb{N}.$ Since $p_n\leq
z_n$ and $\tau(\mathbf{1}-p_n) \rightarrow 0$ we get
$\tau(\mathbf{1}-z_n) \rightarrow 0.$ Thus
$\bigvee\limits_{n\geq1}z_n=\mathbf{1}.$ Set
$$
f_1=z_1,\, f_n=z_n\wedge\left(\vee_{k=1}^{n-1}f_k\right)^\perp,\,
n>1.
$$
Then  $\{f_n\}$ is a sequence of mutually orthogonal central
projections with $\bigvee\limits_{n\geq1}f_n=\mathbf{1}.$

Set
$$
c=\sum\limits_{n=1}^\infty n^{-1}f_n
$$
and
$$
p=\sum\limits_{n=1}^\infty f_np_n,
$$
where convergence of series means the convergence in the strong
operator topology. Then $c$ is an invertible central element in
$S(M)$ and $p$ is a faithful projection in $M.$

Let us show that
$$
\|vv^\ast cD(v)v^\ast\|\leq 1
$$
for all $v\in \mathcal{U}(pMp).$

Take  $v\in \mathcal{U}(pMp)$ and put $v_n=f_n v,$ $n\in
\mathbb{N}.$ Since $f_n\leq z_n$ it follows that $v_n\in
\mathcal{U}(p_nMp_n).$ Taking into account that $f_n c=n^{-1}f_n$
from the inequality \eqref{nnn} we have
$$
\|v_nv_n^\ast cD(v_n)v_n^\ast\|\leq 1.
$$
Notice that
$$
f_nvv^\ast cD(v)v^\ast f_n=(f_nv)(f_nv)^\ast
cD(f_nv)(f_nv)^\ast=v_nv_n^\ast cD(v_n)v_n^\ast.
$$
Since   $\{f_n\}$ is a sequence of mutually orthogonal central
projections we obtain that
$$
||vv^\ast cD(v)v^\ast||=\sup\limits_{n\geq 1}||v_nv_n^\ast
cD(v_n)v_n^\ast||\leq1.
$$
Thus as in the proof of Lemma~\ref{boun} it follows that the
derivation $cpDp$ maps $pMp$ into itself. The proof is complete.
\end{proof}

In the following Lemmata~\ref{lem:eq}-\ref{lem:main} we do  not
assume the  continuity of  derivations.

\begin{lemma}\label{lem:eq}  Let $M$ be an arbitrary von Neumann algebra
  with   mutually
equivalent  orthogonal projections $e, f$  such that
$e+f=\mathbf{1}.$ If $D:S(M)\rightarrow S(M)$ is a derivation such
that $D|_{fS(M)f}\equiv 0$ then
$$
D(x)=ax-xa
$$
for all $x\in S(M),$ where $a=D(u^\ast)u$ and $u$ is a partial
isometry in $M$ such that $u^\ast u=e,\, u u^\ast=f.$
\end{lemma}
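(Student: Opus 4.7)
My approach is to set $\delta := D - D_a$ with $a = D(u^*)u$, and to show $\delta \equiv 0$. Since $\delta$ is again a derivation, the natural plan is to exploit the Peirce decomposition $S(M) = eS(M)e \oplus eS(M)f \oplus fS(M)e \oplus fS(M)f$ induced by $e+f=\mathbf{1}$, and to use $u,u^*$ to shuttle information between the four corners.

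I would begin with routine preliminaries. From $D(\mathbf{1})=0$ and $f \in fS(M)f$ we get $D(f)=0$, hence $D(e)=0$. Applying Leibniz to the product $y = eye$ and similarly to $eyf$, $fye$, $fyf$, the identities $D(e)=D(f)=0$ show that $D$ preserves each of the four Peirce corners. In particular, $u = fue \in fS(M)e$ gives $D(u) \in fS(M)e$ and $u^* = eu^*f \in eS(M)f$ gives $D(u^*) \in eS(M)f$, so $a = D(u^*)u$ lies in $eS(M)e$. Because $a$ sits in the opposite corner from $fS(M)f$, we have $ay = ya = 0$ for $y \in fS(M)f$, so $D_a$ also vanishes on $fS(M)f$; consequently $\delta$ still vanishes on $fS(M)f$ and still preserves each corner.

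The main step is to verify $\delta(u^*) = 0$ and $\delta(u)=0$. For $u^*$: one has $au^* = D(u^*)uu^* = D(u^*)f = D(u^*)$, while $u^*a = u^*D(u^*)u$; and since $ef=0$ forces $(u^*)^2=0$, differentiating gives $u^*D(u^*) = -D(u^*)u^* \in eS(M)f \cdot eS(M)f = 0$, so $u^*a = 0$ and $D_a(u^*)=D(u^*)$. Symmetrically, $D(uu^*)=D(f)=0$ yields $uD(u^*) = -D(u)u^*$, so $ua = uD(u^*)u = -D(u)u^*u = -D(u)e = -D(u)$, while $au = 0$; thus $D_a(u) = -ua = D(u)$.

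The remaining corners then fall by a sandwich argument. For $x \in eS(M)e$, $uxu^* \in fS(M)f$, so $\delta(uxu^*) = u\delta(x)u^* = 0$ (using $\delta(u)=\delta(u^*)=0$), and multiplying by $u^*$ on the left and $u$ on the right gives $e\delta(x)e=0$; since $\delta(x) \in eS(M)e$, $\delta(x)=0$. For $x \in eS(M)f$ use $ux \in fS(M)f$ to get $u\delta(x) = 0$, whence $e\delta(x)=0$ and $\delta(x) = e\delta(x)f = 0$; the corner $fS(M)e$ is handled symmetrically via $xu^* \in fS(M)f$. Adding up the four corners gives $\delta=0$ on all of $S(M)$. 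The most delicate point is the verification $\delta(u^*)=0$, where the seemingly innocuous identity $u^*D(u^*)=0$ depends critically on both $(u^*)^2=0$ and the Peirce location of $D(u^*)$; everything else is bookkeeping in the $2\times 2$ matrix structure.
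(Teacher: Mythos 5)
Your proof is correct and takes essentially the same route as the paper: the same element $a=D(u^*)u$, the same Peirce decomposition of $S(M)$ with respect to $e,f$, and the same device of conjugating by $u$ (via $uxu^*$, $ux$, $xu^*$) into $fS(M)f$ where $D$ vanishes. The only difference is organizational --- you subtract $D_a$ and check that the difference annihilates $u$, $u^*$ and each corner, whereas the paper computes $D(x)$ directly on each corner using $x=u^*uxu^*u$ and the identity $u^*D(u)=-a$; the substance is identical.
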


\begin{proof}
Since  $D(f)=0$ we have
$$
D(e)=D(\mathbf{1}-f)=D(\mathbf{1})-D(f)=0.
$$
Thus
$$
D(ex)=eD(x),\, D(xe)=D(x)e,
$$
$$
D(fx)=fD(x),\, D(xf)=D(x)f
$$
for all $x\in S(M).$

Now take the partially isometry $u\in M$ such that
$$
u^\ast u=e,\, u u^\ast=f.
$$
Set $a=D(u^\ast)u.$ Since $eu^\ast=u^\ast,$ $u e=u$ we have
$$
a=D(u^\ast)u=D(e u^\ast)u e=e D(u^\ast)u e,
$$
i.e. $a\in e S(M) e=S(eMe).$

We shall show that
$$
D(x)=ax-xa
$$
for all $x\in S(M).$

Consider the following cases.

Case 1. $x=exe.$ Note that
$$
x=exe= u^\ast u x u^\ast u
$$
and
$$
u x u^\ast =f(u x u^\ast)f\in fS(M)f.
$$
Therefore $D(u x u^\ast)=0.$ Further
\begin{eqnarray*}
D(x) & =& D(u^\ast u x u^\ast u)= \\
& = & D(u^\ast)u x u^\ast u+ u^\ast D(u x u^\ast) u + u^\ast u x
u^\ast D(u)=\\
& = & D(u^\ast)u x+  x u^\ast D(u),
\end{eqnarray*}
i.e.
$$
D(x)=D(u^\ast)u x+  x u^\ast D(u).
$$
Taking into account
$$
u^\ast D(u)=D(u^\ast u)-D(u^\ast)u=D(e)-D(u^\ast)u=-a
$$
we obtain
$$
D(x)=ax-xa.
$$

Case 2. $x=e x f. $ Then
\begin{eqnarray*}
D(x) & =& D(exf)
\ = \ D(u^\ast u x u u^\ast)= \\
& = & D(u^\ast)u x u u^\ast + u^\ast D(u x u u^\ast)= ax,
\end{eqnarray*}
because $u x u u^\ast\in f S(M) f$ and $D(u x u u^\ast)=0.$ Thus
$D(x)=ax.$ Since $a\in eS(M)e$ we have
$$
xa=exf eae=0.
$$
Therefore
$$
D(x)=ax-xa.
$$

Case 3. $x=fxe.$ Then
\begin{eqnarray*}
D(x) & =& D(fxe)
\ = \ D(uu^\ast  x  u^\ast u)= \\
& = & D(u  u^\ast  x  u^\ast) u + u u^\ast  x  u^\ast D(u)=x
u^\ast D(u).
\end{eqnarray*}
Since $u^\ast D(u)=-a$ we get  $D(x)=-xa.$ Since $a\in eS(M)e$ have

$$
ax=eae fxe=0.
$$
Therefore
$$
D(x)=ax-xa.
$$

For an arbitrary element $x\in S(M)$ we consider  its
representation of the form $x=exe+exf+fxe+fxf$ and taking into
account the above cases we obtain
$$ D(x)=ax-xa.
$$
The proof is complete. \end{proof}

\begin{lemma}\label{lem:part}  Let $M$ be a  von Neumann algebra
  and let $e, f\in M$ be   projections such that $0\neq f\sim e\leq f^\perp.$ If
$D:S(M)\rightarrow S(M)$ is a derivation with $D|_{fS(M)f}\equiv
0$ then there exists an element $a\in S(M)$ such that
$$
D|_{pS(M)p}\equiv D_a|_{pS(M)p},
$$
 where $p=e+f.$
\end{lemma}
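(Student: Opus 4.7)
The plan is to reduce to Lemma~\ref{lem:eq} by localizing the derivation to the corner $pMp$. First I would define the induced map $\widetilde{D}:pS(M)p\to pS(M)p$ by $\widetilde{D}(x)=pD(x)p$. Using $x=pxp$ for $x\in pS(M)p$, a direct application of the Leibniz rule shows that $\widetilde{D}$ is a derivation on $pS(M)p=S(pMp)$, and it vanishes on $fS(pMp)f=fS(M)f$ by the hypothesis on $D$.

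Inside the von Neumann algebra $pMp$, the projections $e$ and $f$ are orthogonal, equivalent (the implementing partial isometry $u\in M$ with $u^\ast u=e$, $uu^\ast=f$ automatically lies in $pMp$ since $e,f\leq p$), and sum to $p=\mathbf{1}_{pMp}$. Thus Lemma~\ref{lem:eq} applies to $\widetilde{D}$ on $S(pMp)$ and yields an element $a_0=\widetilde{D}(u^\ast)u\in pS(M)p$ with $pD(x)p=a_0 x-xa_0$ for every $x\in pS(M)p$.

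To recover the full $D(x)$, which a priori has nonzero components outside $pS(M)p$, I would exploit that $x=xp=px$ and again use the Leibniz rule: expanding $D(x)=D(xp)$ gives $D(x)p^\perp=xD(p)$, while $D(x)=D(px)$ gives $p^\perp D(x)=D(p)x$. Combined with the identities $pD(p)p=0$ (shown at the start of the proof of Lemma~\ref{duu}) and $p^\perp D(p)p^\perp=0$ (the same argument applied to $p^\perp=\mathbf{1}-p$), the four corner components of $D(x)$ with respect to $\{p,p^\perp\}$ become $pD(x)p=a_0 x-xa_0$, $pD(x)p^\perp=x\cdot pD(p)p^\perp$, $p^\perp D(x)p=p^\perp D(p)p\cdot x$, and $p^\perp D(x)p^\perp=0$.

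Finally, I would set
$$a=a_0+[D(p),p]=a_0+p^\perp D(p)p-pD(p)p^\perp\in S(M).$$
A straightforward component-by-component check, using $xp^\perp=p^\perp x=0$ for $x\in pS(M)p$, shows that $ax-xa$ reproduces exactly these four corners, whence $D(x)=ax-xa$ for all $x\in pS(M)p$. The only real obstacle is the bookkeeping of the off-diagonal pieces in this last step; once organized, the whole argument reduces cleanly to the already-established Lemma~\ref{lem:eq} applied inside $pMp$.
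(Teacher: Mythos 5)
Your argument is correct: compressing to $\widetilde{D}=pDp$, invoking Lemma~\ref{lem:eq} inside $pMp$ (where $e+f=p=\mathbf{1}_{pMp}$ and the implementing partial isometry lies in $pMp$), and then reconstructing the off-corner pieces of $D(x)$ via $D(x)p^\perp=xD(p)$, $p^\perp D(x)=D(p)x$ and $pD(p)p=0$ does yield $D(x)=ax-xa$ on $pS(M)p$ with $a=a_0+[D(p),p]$; I checked the corner bookkeeping and it closes up exactly as you say (note that $p^\perp D(x)p^\perp=0$ already follows from $p^\perp x=0$, so the identity $p^\perp D(p)p^\perp=0$ you cite is not actually needed). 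The paper reaches the same conclusion by the same key reduction but in the opposite order: it first subtracts the inner derivation $D_b$ with $b=D(e)e-eD(e)$, verifies that $D_b$ still annihilates $fS(M)f$ (using $D(f)=0$ and $ef=0$) and that $\Delta=D-D_b$ satisfies $\Delta(p)=0$, so that $\Delta$ genuinely restricts to a derivation of $pS(M)p$ to which Lemma~\ref{lem:eq} applies, giving $a=b+c$. Thus the paper's route is ``correct first, then restrict,'' while yours is ``restrict first, then repair the off-diagonal corners''; yours trades the verification that $D_b$ kills $fS(M)f$ for the explicit four-corner computation at the end, and the two are of comparable length, with your version leaning on the compression fact $pDp$ is a derivation of $S(pMp)$ that the paper records just before Lemma~\ref{boun}.
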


\begin{proof}
Denote $b=D(e)e-eD(e).$ Since $e$ is a projection, one has
$eD(e)e=0.$ Thus
\begin{eqnarray*}
D_b(e) & =& be-eb =\\
& = & \left(D(e)e-eD(e)\right)e-
e\left(D(e)e-eD(e)\right) =\\
& = & D(e)e+eD(e)=D(e^2)=D(e),
\end{eqnarray*}
i.e. $D(e)=D_b(e).$

Now let $x=fxf.$ Taking into account that $ef=0$ we obtain
\begin{eqnarray*}
D_b(x) & =& bx-ba = \\
& = & \left(D(e)e-eD(e)\right)fxf-
fxf\left(D(e)e-eD(e)\right) =\\
& = & -eD(e)fxf-fxfD(e)e=-eD(ef)xf-fxD(fe)e=0,
\end{eqnarray*}
i.e.
 $D_b|_{fS(M)f}\equiv 0.$
Consider the derivation  $\Delta=D-D_b.$ We have
$$
\Delta(p)=(D-D_b)(e+f)=D(e)-D_b(e)=0,
$$
i.e. $\Delta(p)=0.$ Thus
$$
\Delta(pxp)=p\Delta(x)p
$$
for all $x\in S(M).$ This means that $\Delta$ maps $pS(M)p=S(pM
p)$ into itself. So the restriction $\Delta|_{pS(M)p}$ of $\Delta$
on $pS(M)p$ is a derivation. Moreover
$$
\Delta|_{fS(M)f}=(D-D_b)|_{fS(M)f}\equiv 0.
$$
By Lemma~\ref{lem:eq} there exists $c\in pS(M)p$ such that
$$
\Delta|_{pS(M)p}\equiv D_c|_{pS(M)p}.
$$
Then
$$
D|_{pS(M)p}=(\Delta+D_b)|_{pS(M)p}= D_c|_{pS(M)p}+
D_b|_{pS(M)p}=D_{b+c}|_{pS(M)p}.
$$
So
$$
D|_{pS(M)p}=D_{b+c}|_{pS(M)p}.
$$
The proof is complete. \end{proof}

\begin{lemma}\label{lem:two}  Let $M$ be a  von Neumann algebra
of type II$_1$  with faithful normal center-valued trace $\Phi$
  and let $f$ be  a  projection such that $\Phi(f)\geq \varepsilon \mathbf{1},$
  where $0<\varepsilon<1.$  If
$D:S(M)\rightarrow S(M)$ is a derivation such that
$D|_{fS(M)f}\equiv 0$ then $D$ is inner.
\end{lemma}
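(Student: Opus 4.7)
The plan is to use Lemmata~\ref{lem:eq} and~\ref{lem:part} inductively, enlarging the corner on which the derivation acts trivially (after subtracting off inner pieces) by doubling its center-valued trace at each step, until that trace reaches exactly $\tfrac12\mathbf{1}$, at which point Lemma~\ref{lem:eq} directly concludes innerness on the whole algebra. The key structural fact used is that in a type II$_1$ algebra, equivalence of projections is detected by $\Phi$ and any projection admits subprojections with any prescribed (centrally dominated) center-valued trace.

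Fix $N\in\mathbb{N}$ with $2^{-N}\leq\varepsilon$. Since $\Phi(f)\geq\varepsilon\mathbf{1}\geq 2^{-N}\mathbf{1}$, the type II$_1$ structure produces a subprojection $f_0\leq f$ with $\Phi(f_0)=2^{-N}\mathbf{1}$; obviously $D|_{f_0 S(M) f_0}\equiv 0$. Set $p_0:=f_0$ and $D^{(0)}:=D$, and suppose inductively that for some $0\leq k\leq N-2$ we have constructed a projection $p_k$ with $\Phi(p_k)=2^{k-N}\mathbf{1}$ and a derivation $D^{(k)}$ on $S(M)$, differing from $D$ by an inner derivation, such that $D^{(k)}|_{p_k S(M) p_k}\equiv 0$. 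Because $\Phi(p_k)\leq\tfrac12\mathbf{1}\leq\Phi(p_k^\perp)$, one finds a subprojection $e_k\leq p_k^\perp$ with $\Phi(e_k)=\Phi(p_k)$, and then $e_k\sim p_k$. Lemma~\ref{lem:part} (with $f:=p_k$, $e:=e_k$) yields $a_k\in S(M)$ such that
$$
D^{(k)}|_{p_{k+1} S(M) p_{k+1}}\equiv D_{a_k}|_{p_{k+1} S(M) p_{k+1}},
\qquad p_{k+1}:=p_k+e_k.
$$
Setting $D^{(k+1)}:=D^{(k)}-D_{a_k}$ gives a derivation vanishing on $p_{k+1}S(M)p_{k+1}$, with $\Phi(p_{k+1})=2^{k+1-N}\mathbf{1}$, completing the induction.

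After exactly $N-1$ iterations we arrive at a projection $p_{N-1}$ with $\Phi(p_{N-1})=\tfrac12\mathbf{1}=\Phi(p_{N-1}^\perp)$, hence $p_{N-1}\sim p_{N-1}^\perp$ and $p_{N-1}+p_{N-1}^\perp=\mathbf{1}$, together with a derivation $D^{(N-1)}$ of $S(M)$ that vanishes on $p_{N-1}S(M)p_{N-1}$. Lemma~\ref{lem:eq} then supplies $a_{N-1}\in S(M)$ with $D^{(N-1)}=D_{a_{N-1}}$, and telescoping gives
$$
D=\sum_{k=0}^{N-1}D_{a_k}=D_{a_0+a_1+\cdots+a_{N-1}},
$$
which is inner. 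The only real technical point to verify is the existence of the centrally calibrated subprojections $f_0\leq f$ and $e_k\leq p_k^\perp$ used at each stage; this is a standard consequence of comparison theory for type II$_1$ algebras via the faithful normal center-valued trace~$\Phi$. Everything else (that $D^{(k)}$ remains a derivation, and that it truly annihilates $p_k S(M) p_k$) is immediate from the construction and the conclusion of Lemma~\ref{lem:part}.
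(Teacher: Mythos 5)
Your proposal is correct and follows essentially the same route as the paper: pick a subprojection of $f$ with dyadic center-valued trace $2^{-N}\mathbf{1}$, repeatedly apply Lemma~\ref{lem:part} to double the corner on which the (inner-corrected) derivation vanishes, and conclude with Lemma~\ref{lem:eq} once the projection has trace $\tfrac12\mathbf{1}$ and is thus equivalent to its complement. Your explicit induction merely formalizes the paper's terse ``applying Lemma~\ref{lem:part} $(n-1)$ times,'' so nothing further is needed.
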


\begin{proof} Without loss  of generality we may assume that
$\Phi(\mathbf{1})=\mathbf{1}.$ Choose a number $n\in \mathbb{N}$
such that $2^{-n}<\varepsilon.$ Since $M$ is of type II$_1,$ there
exists a projection $f_1\leq f$ such that
$\Phi(f_1)=2^{-n}\mathbf{1}.$ Since $f_1\leq f$ we have
$D|_{f_1S(M)f_1}\equiv 0.$ Therefore  replacing, if necessary,
$f$ by $f_1,$  we may assume that
$\Phi(f)=2^{-n}\mathbf{1}.$

Consider the following cases.

Case 1. $n=1.$ Then $f\sim f^\perp.$ By Lemma~\ref{lem:eq} $D$ is
inner.

Case 2. $n>1.$  Take a projection $e\leq f^\perp$ with $e\sim f.$
Denote $p=e+f.$ Applying Lemma~\ref{lem:part} we can find an
element $a_p\in S(M)$ such that
$$
D|_{pS(M)p}\equiv D_{a_p}|_{pS(M)p}.
$$
Set $\Delta:=D-D_{a_p}.$ Then $\Phi(p)=2^{1-n}\mathbf{1}$ and
$$
\Delta|_{pS(M)p}\equiv 0.
$$
Similarly, applying Lemma~\ref{lem:part} $(n-1)$ times,  we can find
an element $a\in S(M)$ such that $D=D_a.$ The proof is complete.
\end{proof}

\begin{lemma}\label{lem:main}  Let $M$ be a  von Neumann algebra
of type II$_1$  and let $f$ be  a faithful   projection. If
$D:S(M)\rightarrow S(M)$ is a derivation such that
$D|_{fS(M)f}\equiv 0$ then $D$ is inner.
\end{lemma}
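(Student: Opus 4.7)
The plan is to localize $D$ on central summands where $\Phi(f)$ admits a lower bound, apply Lemma~\ref{lem:two} on each summand, and glue the resulting inner implementers via the product decomposition $S(M)\cong\prod_n S(z_nM)$ from Section~2. The obstruction to using Lemma~\ref{lem:two} directly is that a faithful projection only has $c(f)=\mathbf{1}$, so $\Phi(f)\in Z(M)$ has full support but can fail to be bounded below by any $\varepsilon\mathbf{1}$. The decisive preliminary, which I expect to be the main obstacle, is to show that any derivation vanishing on $fS(M)f$ must also vanish on the center $Z(M)$; granted this, $D$ splits along any family of central projections and the rest is bookkeeping.

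\textbf{Step 1: $D|_{Z(M)}\equiv 0$.} For $z\in Z(M)$ and $x\in S(M)$, the identity $zx=xz$ together with the Leibniz rule yields $D(z)x=xD(z)$; hence $D(z)\in Z(S(M))$ is normal, with a well-defined support projection $s\in Z(M)$. Since $f\in fS(M)f$ and $zf=fzf\in fS(M)f$ we have $D(f)=0$ and $D(zf)=0$, whence $D(z)f=0$. The polar decomposition $D(z)=v|D(z)|$ (with $v$ a partial isometry in $Z(S(M))$) yields $|D(z)|f=v^\ast D(z)f=0$. Let $p_k:=\chi_{[1/k,k]}(|D(z)|)\in Z(M)$; on $p_kM$ the element $|D(z)|p_k$ admits a bounded inverse $c_k\in p_kZ(M)$. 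Multiplying $|D(z)|f=0$ by $c_k$ gives $p_kf=0$, and since $p_k\uparrow s$ we obtain $sf=0$. Thus $f\leq\mathbf{1}-s$, so $\mathbf{1}=c(f)\leq\mathbf{1}-s$, forcing $s=0$ and $D(z)=0$.

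\textbf{Step 2: Decomposition, application of Lemma~\ref{lem:two}, and gluing.} Set $z_n:=\chi_{[1/n,1/(n-1))}(\Phi(f))\in Z(M)$ (with the convention $1/0=\infty$). By faithfulness of $\Phi$ and of $f$, these are mutually orthogonal central projections with $\bigvee_n z_n=\mathbf{1}$, and they satisfy $z_n\Phi(f)\geq(1/n)z_n$. Since $D(z_n)=0$ by Step~1, $D$ restricts to a derivation $D_n$ on $S(z_nM)=z_nS(M)$ that vanishes on $(z_nf)S(z_nM)(z_nf)\subseteq fS(M)f$. The algebra $z_nM$ is of type II$_1$ with center-valued trace $\Phi|_{z_nM}$, and $\Phi(z_nf)\geq(1/n)z_n$; for $n\geq 2$, Lemma~\ref{lem:two} produces $a_n\in S(z_nM)$ with $D_n=D_{a_n}$, while for $n=1$ faithfulness of $\Phi$ forces $z_1f=z_1$, hence $D_1\equiv 0$ and we take $a_1=0$. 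Using $S(M)\cong\prod_n S(z_nM)$, define $a\in S(M)$ by $z_na=a_n$. Then, using $D(z_n)=0$,
\[
z_nD(x)=D(z_nx)=D_n(z_nx)=a_n(z_nx)-(z_nx)a_n=z_n(ax-xa)
\]
for every $x\in S(M)$; summing over $n$ gives $D(x)=ax-xa=D_a(x)$, so $D$ is inner.
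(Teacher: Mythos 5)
Your proof is correct and follows essentially the same route as the paper: decompose $\mathbf{1}$ into mutually orthogonal central projections on which $\Phi(f)$ is bounded below, apply Lemma~\ref{lem:two} on each summand, and glue the implementing elements through $S(M)\cong\prod_n S(z_nM)$. The only difference is that your Step~1 is much heavier than necessary: for a central projection $z$ one has $D(z)=D(z)z+zD(z)=2zD(z)$, hence $zD(z)=0$ and $D(z)=0$ automatically, without using $f$ or the hypothesis $D|_{fS(M)f}\equiv 0$, which is all the paper needs (and your separate treatment of the summand $n=1$ is likewise avoidable, since $z_1\Phi(f)\geq z_1\geq \varepsilon z_1$ for any $\varepsilon\in(0,1)$).
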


\begin{proof} Since $f$ is a faithful,  we see that
$c(\Phi(f))=\mathbf{1},$ where $c(x)=\inf\{z\in P(Z(M)): zx=x\}$
is the central support of the element $x\in S(M).$ There exist a
family $\{z_n\}_{n\in F},$ $F\subseteq\mathbb{N},$  of central
projections from $M$ with $\bigvee\limits_{n\in F} z_n =
\mathbf{1}$ and a sequence $\{\varepsilon_n\}_{n\in F}$ with
$\varepsilon_n> 0$ such that
$$
z_n\Phi(f)\geq \varepsilon_n z_n
$$
for all $n\in F.$ Since $z_n$ is a  central projection, we have
$D(z_n)=0.$ Thus
$$
D(z_n x)=z_nD(x)
$$
for all $x\in S(M).$ This means that $D$ maps $z_nS(M)=S(z_nM)$
into itself. So
 $z_n D|_{S(z_nM)}$  is a
derivation on  $S(z_nM).$ Moreover
$$
z_nD|_{z_nfS(M)z_nf}\equiv 0
$$
and
$$
z_n\Phi(f)\geq \varepsilon_n z_n.
$$
By Lemma~\ref{lem:two} there exists $a_n=z_n a_n\in S(z_nM)$ such
that
$$
z_nD|_{S(z_nM)}\equiv D_{a_n}|_{S(z_nM)}
$$
for all $n\in F.$ There exists a unique element $a\in S(M)$ such
that $z_n a=z_n a_n$ for all $n\in F.$ It is clear that $D=D_a.$
The proof is complete. \end{proof}

\textit{Proof of Theorem~\ref{main}}.  For  finite type I von
Neumann algebras the assertion  has been proved in \cite[Corollary 4.5]{Alb2}.
Therefore it is sufficient to consider the case of type II$_1$ von Neumann
algebras.

Case 1. The trace $\tau$ is finite ( we may suppose without loss of
generality that $\tau(\mathbf{1})=1).$  By Lemma~\ref{last} there
exist an invertible central element $c\in S(M)$ and a faithful
projection $p\in M$  such that
 the derivation  $cpDp$ on  $S(pMp)=pS(M)p$ maps $pMp$ into itself.
 By Sakai's Theorem \cite[Theorem 1]{Sak66} there is an element $a_p\in pMp$ such that
$cpD(x)p=a_p x-xa_p$ for all $x\in pMp.$ Since $cD$ is
$t_\tau$-continuous it follows that
$$
cpD(x)p=a_p x-xa_p
$$ for all
$x\in S(pMp).$ So
$$
 pD(x)p=(c^{-1}a_p) x-x(c^{-1}a_p)
$$ for all
$x\in S(pMp).$

 As in the proof of
Lemma~\ref{lem:part} denote $b=D(p)p-pD(p).$ Then $D(p)=D_{b}(p).$

Consider the derivation $\Delta$ on $S(M)$ defined by
$$
\Delta=D-D_{c^{-1}a_p}-D_{b}.
$$
Then
$$
\Delta(p)=D(p)-D_{c^{-1}a_p}(p)-D_{b}(p)=0,
$$
because $D(p)=D_{b}(p)$ and $c^{-1}a_p\in pMp.$

Let $x\in S(pMp).$ Taking into account that $\Delta(p)=0$ we have
\begin{eqnarray*}
\Delta(x) & =& \Delta(pxp)=p\Delta(pxp)p=\\
& = & pD(pxp)p-pD_{c^{-1}a_p}(pxp)p-pD_{b}(pxp)p =0,
\end{eqnarray*}
because $pD(pxp)p=pD_{c^{-1}a_p}(pxp)p$ and $pbp=0.$ So
$$
\Delta|_{S(pMp)}\equiv 0.
$$
Since $p$ is a faithful projection in $M,$ by Lemma~\ref{lem:main}
$\Delta=D-D_{c^{-1}a_p}-D_{b}$ is an inner derivation. This means
that there exists an element $h\in S(M)$ such that
$$
D=D_{h}+D_{c^{-1}a_p}+D_{b}=D_{h+c^{-1}a_p+b}.
$$

Case 2. Let $\tau$ be an arbitrary  faithful normal semi-finite
trace on $M.$ Take  a family $\{z_i\}_{i\in I}$  of mutually
orthogonal central projections in $M$ with $\bigvee\limits_{i\in
I}z_i=\mathbf{1}$ and such that $\tau(z_i)<+\infty$  for every
$i\in I$ (such family exists because $M$ is a finite algebra).
The map $D_i: S(z_iM)\rightarrow S(z_iM)$ defined by
$$
D_i(x)=z_i D(z_ix),\, x\in S(z_iM)
$$
is a derivation on  $S(z_iM).$ By the case 1 for each $i\in I$ there exists $a_i\in
S(z_iM)$ such that $D_i=D_{a_i}$.  Further there is a
unique element $a\in S(M)$ such that $z_i a=z_ia_i$ for all $i\in
I.$ Now it is clear that $D=D_a.$ The proof is complete.

Recall  that a $\ast$-subalgebra $\mathcal{A}$  of $S(M)$ is
called absolutely solid if  from $x\in S(M),$  $y\in
\mathcal{A},$ and $|x|\leq |y|$ it follows that $x\in \mathcal{A}.$ Note
that  $S(M, \tau)$ is an absolutely solid $\ast$-subalgebra in
$S(M).$

The following theorem gives a solution of the mentioned problem  \cite[Problem 3]{AK2} for the algebra $S(M,\tau$) of
all  $\tau$- measurable operators affiliated with $M$.

\begin{theorem}\label{smt}
Let $M$ be a finite   von Neumann algebra with a faithful normal
semi-finite trace $\tau$. Then every $t_\tau$-continuous
derivation $D:S(M, \tau)\rightarrow S(M,\tau)$ is inner.
\end{theorem}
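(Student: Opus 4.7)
The plan is to reduce Theorem~\ref{smt} to Theorem~\ref{main} via a central decomposition of $M$. If $\tau(\mathbf{1})<\infty$, then $S(M,\tau)=S(M)$ and $t_\tau=t$, so Theorem~\ref{main} applies directly. For general semi-finite $\tau$, I would select mutually orthogonal central projections $\{z_i\}_{i\in I}\subset M$ with $\bigvee_{i\in I}z_i=\mathbf{1}$ and $\tau(z_i)<\infty$. A brief computation from $D(z_i)=D(z_i^2)=2z_iD(z_i)$ (multiplied by $z_i$) gives $D(z_i)=0$, so $D$ restricts to a $t_{\tau_i}$-continuous derivation $D_i$ on $z_iS(M,\tau)=S(z_iM,\tau_i)=S(z_iM)$, where $\tau_i=\tau|_{z_iM}$ is a finite trace. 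The finite-trace case of Theorem~\ref{main} then furnishes $a_i\in S(z_iM)$ with $D_i=D_{a_i}$.

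Via the isomorphism $S(M)\cong\prod_{i\in I}S(z_iM)$, the family $\{a_i\}$ determines a unique $a\in S(M)$ with $z_ia=a_i$ for all $i$. For any $x\in S(M,\tau)$ and any $i\in I$, the identity $z_i(D(x)-[a,x])=D(z_ix)-[a_i,z_ix]=D_i(z_ix)-D_{a_i}(z_ix)=0$ shows that $D(x)=[a,x]$ holds in $S(M)$.

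The step I expect to be the main obstacle is showing that $a$ can be chosen in $S(M,\tau)$ rather than only in $S(M)$. Each $a_i$ is unique modulo $Z(S(z_iM))$, so we may replace $a$ by $a-c$ for any central $c\in Z(S(M))$. The plan is to combine this freedom with the fact that $[a,x]\in S(M,\tau)$ for every $x\in S(M,\tau)$ and the absolute solidity of $S(M,\tau)$ in $S(M)$ (highlighted in the remark preceding the theorem). A natural approach is, on each $z_iM$, to use the center-valued trace to normalize $a_i$ and then apply spectral truncations on projections $q_i\in z_iM$ with $\tau_i(z_i-q_i)$ arbitrarily small, choosing the central corrections consistently across $i$ so that the glued element $a$ satisfies the global $\tau$-measurability condition: for every $\varepsilon>0$ there exists $q\in M$ with $\tau(q^{\perp})<\varepsilon$ and $aq\in M$.
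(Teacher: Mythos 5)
Your reduction is exactly the paper's: decompose $\mathbf{1}=\bigvee_i z_i$ with $\tau(z_i)<\infty$, apply Theorem~\ref{main} to each $D_i$ on $S(z_iM)$ (where $t_{\tau_i}$ coincides with the locally measure topology because $\tau_i$ is finite), and glue the $a_i$ to a single $a\in S(M)$ implementing $D$. Up to that point the argument is correct and complete.

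The genuine gap is the step you yourself flag as ``the main obstacle'': producing a central $c\in S(Z(M))$ with $a-c\in S(M,\tau)$. You only sketch a plan, and the plan as stated does not work. First, the index set $I$ may be uncountable, and $\tau$-measurability of the glued element forces much more than per-piece control: if $b\in S(M,\tau)$ and $q$ is a projection with $\tau(q^{\perp})<\varepsilon$ and $bq\in M$, then faithfulness of $\tau$ forces $q z_i=z_i$ for all but countably many $i$, so $bz_i$ must actually be a \emph{bounded} operator with a \emph{uniform} norm bound for all but countably many $i$. Choosing projections $q_i$ with $\tau_i(z_i-q_i)$ ``arbitrarily small'' on each piece is therefore not enough; you must show that after a central correction $a_i-c_i$ lies in $z_iM$ with uniformly bounded norm outside a countable exceptional set, and nothing in the truncation argument produces that. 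Second, the proposed normalization $c_i=\Phi_i(a_i)$ via the center-valued trace is not even well defined, since $a_i\in S(z_iM)$ is in general unbounded and need not be trace-integrable; and even when defined it gives no control of $\|a_i-c_i\|$. This is precisely the nontrivial point the paper does not prove ad hoc but settles by citing \cite[Proposition 5.17]{BPS}: since $S(M,\tau)$ is an absolutely solid $\ast$-subalgebra of $S(M)$ and $D$ is implemented by $a\in S(M)$ with $D(S(M,\tau))\subseteq S(M,\tau)$, the implementing element can be chosen inside $S(M,\tau)$. You noticed the absolute solidity hint but did not convert it into an actual argument or an appeal to such a result, so as written the proof of Theorem~\ref{smt} is incomplete at its decisive step.
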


\begin{proof} As  above take  a family $\{z_i\}_{i\in I}$  of mutually
orthogonal central projections in $M$ with $\bigvee\limits_{i\in
I}z_i=\mathbf{1}$ and such that $\tau(z_i)<+\infty$  for every
$i\in I.$ The map $D_i: S(z_iM, \tau_i)\rightarrow S(z_iM,
\tau_i)$ defined by
$$
D_i(x)=z_i D(z_ix),\, x\in S(z_iM, \tau_i)
$$
is a derivation on $S(z_iM, \tau_i)=S(z_iM),$ where
$\tau_i=\tau|_{z_iM},$ $i\in I.$  Note that the restriction of the
topology $t_\tau$ on $S(z_iM, \tau_i)$ coincides with the topology
$t_{\tau_i}.$ Since $\tau(z_i)<+\infty$  we have that the measure
topology $t_{\tau_i}$ on $S(z_iM, \tau_i)$ coincides with the
locally measure topology.  Therefore the derivation $D_i$ is
continuous in the locally measure topology. By Theorem~\ref{main}
for each $i\in I$ there exists $a_i\in S(z_iM)$ such that
$D_i=D_{a_i}$. Now if we take the unique element $a\in S(M)$ such
that $z_i a=z_ia_i$ for all $i\in I$, then we obtain that
$$
z_i D(x)=D(z_i x)=D_i(z_i x)=a_i(z_ix)-(z_i x)a_i=z_i(ax-xa),
$$
i.e.
$$
 D(x)=ax-xa
$$
for all $x\in S(M, \tau),$ i.e the derivation $D$ is implemented by the
element $a\in S(M)$. Since $S(M, \tau)$ is an absolutely
solid $\ast$-subalgebra in $S(M),$ applying \cite[Proposition
5.17]{BPS} we may choose the element $a$, implementing $D$, from the algebra $S(M, \tau)$ itself.
So $D$ is an inner derivation on $S(M, \tau)$  . The proof is complete.
\end{proof}

\section*{Acknowledgments}

The authors are indebted to the referee for valuable comments and
suggestions.

 \bigskip

\end{document}